
\documentclass[reqno,12pt,a4paper]{amsart}
\usepackage{pgfplots}
\usepackage[english]{babel}
\usepackage{amsmath,amsthm,amssymb,amsfonts}
\usepackage{scrtime, cancel}
\usepackage{enumitem, color}
\usepackage{mathtools}
\usepackage{ifpdf}
\ifpdf
\else
\fi
\usepackage{pdfsync,verbatim}
\usepackage{color}
\numberwithin{equation}{section}   

\allowdisplaybreaks
\newtheorem{theorem}{Theorem}[section]
\newtheorem{lemma}[theorem]{Lemma}
\newtheorem{proposition}[theorem]{Proposition}

\theoremstyle{definition}

\addtolength{\headheight}{3.2pt}    
\textwidth=15cm
\textheight=22cm

\setlength{\oddsidemargin}{15.5pt} 
\setlength{\evensidemargin}{15.5pt}

\newcommand{\R}{\mathbb{R}}
\newcommand{\N}{\mathbb{N}}
\newcommand{\Z}{\mathbb{Z}}
\def\misgausskd{d \gamma^{k}_\infty}
\def\misgaussk{\gamma^{k}_\infty}
\DeclareMathOperator{\tr}{tr}             
\newcount\dotcnt\newdimen\deltay
\def\Ddot#1#2(#3,#4,#5,#6){\deltay=#6\setbox1=\hbox to0pt{\smash{\dotcnt=1
\kern#3\loop\raise\dotcnt\deltay\hbox to0pt{\hss#2}\kern#5\ifnum\dotcnt<#1
\advance\dotcnt 1\repeat}\hss}\setbox2=\vtop{\box1}\ht2=#4\box2}

\title[Normal Ornstein-Uhlenbeck semigroups
  ]{
The maximal operator of a
 normal   
Ornstein--Uhlenbeck  semigroup is of  weak type $(1,1)$ }
\subjclass[2000]{
47D03, 
42B25 
}
\author{Valentina Casarino}
\address{Universit\`a degli Studi di Padova\\Stradella san Nicola 3 \\I-36100 Vicenza \\ Italy}
\email{valentina.casarino@unipd.it}
\author{Paolo Ciatti}
\address{Universit\`a degli Studi di Padova\\Via Marzolo 9 \\I-35100 Padova \\ Italy}
\email{paolo.ciatti@unipd.it}
\author{Peter Sj\"ogren}
\address{Mathematical Sciences  University of Gothenburg 
\\ Mathematical Sciences  Chalmers University of
Technology  \\ SE - 412 96 G\"oteborg, Sweden}
\email{peters@chalmers.se}

\date{\today, \thistime}
\keywords{Ornstein--Uhlenbeck semigroup, normal semigroup, maximal operator, weak type $(1,1)$.}

\begin{document}
{{
\begin{abstract}
{{
Consider a normal Ornstein--Uhlenbeck
semigroup in Euclidean space, whose covariance is given by a positive definite  matrix.
The drift matrix is assumed to have eigenvalues only in the left half-plane.
We prove that the associated maximal operator
is of weak type $(1,1)$
with respect to the invariant measure.
This extends earlier work by G. Mauceri and L. Noselli.   
The proof  
goes via  the  special case where  the
matrix defining the covariance  is
$I$
and the
drift matrix
is diagonal.
}}

\end{abstract}

\maketitle

\section{Introduction}\label{intro}
Let
 $Q$ be  a real, symmetric and positive definite $n\times n$ matrix, and
$B$ a real $n\times n$ matrix whose eigenvalues have negative real parts; here $n\ge 1$.
One defines the covariance  matrices 
$$Q_t=\int_0^t
e^{sB}\,Q\,
e^{sB^*}ds\,,\qquad \text{ $t\in (0,+\infty]$, 
}$$
and the family of Gaussian measures in $\R^n$
$$
d\gamma_t (x)=
(2\pi)^{-\frac{n}{2}}
(\text{det} \, Q_t)^{-\frac{1}{2} }
e^{-\frac12 \langle Q_t^{-1}x,x\rangle}
dx  \,,\qquad \text{ $t\in (0,+\infty]$. }$$
Here   $\gamma_\infty$ is the unique invariant measure.

On the space $\mathcal C_b(\R^n)$
of bounded continuous functions,
we consider  the 
Ornstein--Uhlenbeck semigroup 
$\big(
\mathcal H_t^{Q,B}
\big)_{t> 0}\,,$ explicitly given
 by the Kolmogorov formula 
\begin{equation*}
\mathcal H_t^{Q,B}
f(x)=
\int  
f(e^{tB}x-y)d\gamma_t (y)\,, \quad x\in\R^n\,,
\end{equation*}
(see \cite{Kolmo}).
Its
  infinitesimal generator 
is given by
$$\mathcal L^{Q,B} f=
\frac12
\tr
\big(
Q\nabla^2
f\big)+
\langle Bx, \nabla f \rangle 
\,,\qquad
{\text{ $f\in
\mathcal S (\R^n)$,}}
$$
and $\mathcal  S(\R^n)$ is a core of $\mathcal L^{Q,B}$.
Here  $Q\nabla^2 f$ denotes the product 
of $Q$ and the Hessian matrix of $f$.

The relevance of
 this 
 semigroup
 is also due 
to the fact that $\big(
\mathcal H_t^{Q,B}
\big)_{t> 0}$ 
is the transition semigroup of the Ornstein-Uhlenbeck process
$$\mathcal X (t,x)=e^{tB}x+\int_0^t e^{(t-s)B}
dW(s)\,$$
on $\R^n$, where   
$W$ denotes an $n$-dimensional Brownian motion with
covariance 
matrix
$Q$. This process 
describes the random motion of a particle 
subject to friction; cf. \cite{OU} or \cite{DaPratoZ}.

Among its various properties, 
we only recall here that $\big( \mathcal H_t^{Q,B}\big)_{t>0}$ is strongly continuous in $\mathcal C_0(\R^n)$
 and in  $L^p (\R^n)$ for all $1\le p<\infty$ \cite{DaPrato, Lunardi, Lorenzi}, while
strong continuity fails 
to hold in the space of bounded, uniformly continuous functions in $\R^n$ endowed with the supremum norm (\cite[Lemma 3.2]{DaPrato}, \cite{vanNeerven}).
For some relevant results about differentiability and analiticity
 of $\big( \mathcal H_t^{Q,B}\big)_{t>0}$ in the $L^p$ spaces, we refer the reader to \cite{Lorenzi, Priola}.

We consider
the maximal operator
\begin{equation}\label{def-op-max}
\mathcal H_*^{Q,B} f(x)
=\sup_{t> 0}
\big|
\mathcal H_t^{Q,B} f(x)
\big|\,,\qquad t>0,
\end{equation}
which is 
an essential tool  in the study of the almost everywhere convergence of
 $
\mathcal H_t^{Q,B} f $
  as $t\to 0$ for $f\in L^p(\gamma_\infty$), $1\le p<\infty$.

The boundedness 
properties of $\mathcal H_*^{Q,B}$
are essentially known   when
 $\big(
\mathcal H_t^{Q,B}
\big)_{t> 0}\,$
is  {\it{symmetric}}, i.e., when $\mathcal H_t^{Q,B}$ is self-adjoint on $L^2 (\gamma_\infty)$ for all $t> 0$.
Indeed, 
for  $1<p\le \infty$, the boundedness of 
$\mathcal H_*^{Q,B}$ on $L^p (\gamma_\infty)$ then
follows from the general Littlewood--Paley--Stein theory for symmetric semigroups of contractions on Lebesgue spaces \cite{Stein}.

G. Mauceri and L. Noselli \cite{Mauceri-Noselli}
addressed
the nonsymmetric case, assuming only that $\big(
\mathcal H_t^{Q,B}
\big)_{t> 0}\,$
is {\it{normal}},
i.e., that
$\mathcal H_t^{Q,B}$ is for each $t> 0$
 a  normal operator on $L^2 (\gamma_\infty)$.
Then,
by generalizing Stein's results to a semigroup of normal contractions
whose infinitesimal generator is a sectorial operator of angle less than $\pi/2$,
they were able to prove that 
$\mathcal H_*^{Q,B}$ is bounded on $L^p (\gamma_\infty)$, for all $1<p\le \infty$.

Since the operator $\mathcal H_*^{Q,B}$ is always unbounded on $L^1 (\gamma_\infty)$,
one is led to analyze the weak type $(1,1)$ of the maximal operator. This means  seeking  an estimate
of the form
$$\gamma_\infty \{
x\in\R^n: \, 
\mathcal H_*^{Q,B}f(x)
>\alpha\}
\lesssim  \frac{\|f\|_1}{\alpha}\,,$$
holding for all $\alpha>0$ and all $f\in L^1 (\gamma_\infty)$.
In the  special case $Q=I$ and $B=-I$, 
which is symmetric,
this was proved   by
B. Muckenhoupt in the one-dimensional case  \cite{Muckenhoupt}
 and by the third author in higher dimension \cite{Peter}; 
the proof in \cite{Peter} was then simplified by
T. Men\'arguez, S. P\'erez and F. Soria
 \cite{Soria} (see also \cite{SoriaCR, Soria-Perez}).
 Another simple argument is given in \cite{JLMS}.
 For a nice discussion of the different techniques we refer the reader to \cite{Scotto}. 
 
In \cite{Mauceri-Noselli}
 Mauceri and Noselli  applied a factorization known from \cite{MPRS}, saying that  an arbitrary normal 
Ornstein--Uhlenbeck semigroup $\big(
\mathcal H_t^{Q,B}
\big)_{t> 0}\,$ can be written
as the product of more elementary semigroups, called building blocks. Each building block  is an Ornstein--Uhlenbeck semigroup
with $Q=I$ and 
$B=\lambda (R-I)$, for some  positive $\lambda$
and a real skew-adjoint matrix $R$. 
Mauceri and Noselli 
were able to prove that
for such a building block 
 the truncated maximal operator, defined by taking the supremum in \eqref{def-op-max}
 only over $0< t\le T<\infty$,
 is of weak type $(1,1)$.
If, in addition, $R$ generates a periodic group, they proved that
 the full maximal operator $ H_*^{Q,B} $  is of weak type $(1,1)$.
The
 case when the semigroup involves 
 several building blocks 
 seems not to have
 been considered as yet. Indeed,
 Mauceri and Noselli write 
 ``already the case where $B$ is a diagonal matrix with at least two different eigenvalues 
seems to require new ideas".

In this paper, we give the complete solution
of the problem studied  in  \cite{Mauceri-Noselli},
as follows.
 \begin{theorem}\label{main-theorem}
The maximal operator $\mathcal H_*^{Q,B}$
of 
 an arbitrary
 normal Ornstein--Uhlenbeck semigroup $\big(
\mathcal H_t^{Q,B}
\big)_{t> 0}$ 
is of weak type $(1,1)$ with respect to the invariant measure $\gamma_\infty$.
\end{theorem}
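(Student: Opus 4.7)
The plan is to proceed in two stages. First a reduction: starting from an arbitrary normal Ornstein--Uhlenbeck semigroup, I would use the Mauceri--Noselli / MPRS factorization to write it as a commuting product of building blocks with $Q=I$ and $B=\lambda(R-I)$, where $R$ is skew-adjoint. The standard Gaussian measure on each block is invariant under the orthogonal one-parameter group generated by $R$, and this rotation commutes with the scalar contraction $-\lambda I$; so I would try to remove the rotational factor from the maximal estimate, either by averaging over the rotation group or by exploiting rotation-invariance of both $\gamma_\infty$ and the sup in $t$. Collecting the blocks in an adapted orthonormal frame should then reduce the theorem to the diagonal case $Q=I$, $B=\mathrm{diag}(-\lambda_1,\dots,-\lambda_n)$ with all $\lambda_j>0$, which is the special situation singled out in the abstract.

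In that diagonal case, the semigroup is a tensor product of one-dimensional OU semigroups with distinct contraction rates $e^{-\lambda_j t}$, and the invariant measure is the corresponding product of one-dimensional Gaussians. Adapting the strategy used by the third author and by Men\'arguez--P\'erez--Soria in the isotropic situation, I would split the maximal operator into a local and a global part according to an admissible-region decomposition keyed to the eigenvalues $\lambda_j$. The local part should be dominated outside a negligible set by a centered Hardy--Littlewood-type maximal operator, for which a Calder\'on--Zygmund decomposition with respect to $\gamma_\infty$ yields weak type $(1,1)$. The global part should be controlled by sharp pointwise estimates of $\sup_{t>0} K_t(x,y)$, where $K_t$ is the Mehler kernel against $\gamma_\infty$, integrated in $y$ with respect to $\gamma_\infty$.

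The principal obstacle is precisely the one flagged by Mauceri and Noselli: several distinct eigenvalues force genuinely anisotropic geometry, because the single time parameter $t$ ties the contractions on all axes together. As a consequence, the critical time $t^{*}(x,y)$ where the kernel peaks depends on every coordinate of both points in a coupled way, and the level sets of $\sup_t K_t$ are ellipsoids whose shapes vary across $\mathbb{R}^n$. The hard work will be to design the right anisotropic admissible region, to prove a covering and doubling-type lemma for $\gamma_\infty$ relative to the resulting non-isotropic ball system, and to sum the global kernel estimates across coordinates without losing integrability either at the local-global interface or at infinity; this is where the new input promised by the theorem must enter.
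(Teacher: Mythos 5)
Your two-stage architecture (reduce to $Q=I$, $B$ diagonal, then a local/global split) matches the paper's, but the step you propose for removing the rotational factor would not go through, and the anisotropic global geometry is only gestured at, not supplied. In each two-dimensional building block the drift is $B=\lambda(R-I)$ with $R$ skew, so $e^{tB}=e^{-\lambda t}\,e^{\lambda t R}$: the rotation angle is \emph{tied to} $t$. You cannot average over the rotation group, nor invoke rotation-invariance of the sup in $t$, because varying $t$ changes the contraction $e^{-\lambda t}$ and the rotation $e^{\lambda tR}$ simultaneously, and $\sup_{t>0}|\mathcal H_t f|$ has no rotational symmetry for a general $f$. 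What the paper actually proves (Proposition~\ref{propo-kernel-isotropic1}) is a \emph{pointwise} domination: the two-dimensional Mehler kernel, including the $\cos(q_jt)$ and $\sin(q_jt)$ cross terms, is bounded, uniformly in $q_j$, by the rotation-free kernel with the Gaussian exponent weakened by a factor $\tfrac12$; the proof is a short trigonometric identity followed by AM--GM, not an invariance argument. This forces a slight generalization of the diagonal theorem (the extra parameter $\kappa$ in Theorem~\ref{mehler1111}), which your reduction does not anticipate.

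For the diagonal case you correctly identify the obstacle --- a single $t$ ties all axes together, so the critical time and the level sets of the kernel depend on all coordinates in a coupled way --- but the sketch stops exactly at the point where the new work begins. Global weak type is not obtained by a pointwise bound on $\sup_t K_t(x,u)$ integrated against $\gamma_\infty$; that route does not close in the anisotropic setting. After localizing in the ``local'' coordinates, the paper restricts the global ones to the ellipsoidal annulus $\tfrac12\log\alpha\le R(\xi)\le 2\log\alpha$, introduces polar-like coordinates along the anisotropic flow $\xi\mapsto e^{\lambda s}\xi$, proves an area/transversality estimate for the tubes $\{e^{\lambda s}\xi:s>0,\ \xi\in\Omega\}$ (Lemma~\ref{lemma-Peter-forbidden}), and then handles small $t$ by a forbidden-zone selection argument: greedily pick $x^{(\ell)}$ minimizing $R(\xi)$ on the remaining level set, attach to each a tube $\mathcal Z^{(\ell)}$ and a ball $\mathcal B^{(\ell)}$, show the $\mathcal B^{(\ell)}$ are pairwise disjoint (Lemma~\ref{lemma:disjoint}), and sum. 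None of this is a ``Calder\'on--Zygmund decomposition with respect to $\gamma_\infty$''; it is a stopping-time construction adapted to the ellipsoidal geometry. That is precisely the new input you acknowledge is required but do not provide, so the proposal is a description of the difficulty rather than a proof.
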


We first consider the special case
when $Q=I$
and 
$B=\text{diag}{\big(-\lambda_1, -\lambda_2, \ldots, -\lambda_n\big)}
$,
with $\lambda_j>0$ for  $j=1,\ldots, n$,
and
state in Theorem \ref{weaktype1} the
weak type $(1,1)$ of 
$ H_*^{Q,B} $.
{The proof of this result involves some geometry and occupies most of this paper.}
Theorem \ref{weaktype1} already extends the results in \cite{Mauceri-Noselli},
and forms the basis of the proof of Theorem 
\ref{main-theorem}.
\medskip

The paper is organized as follows.
In  Section
\ref{particular} we introduce the notation, in particular for the relevant Mehler kernel $K_t(x,u)$, 
and state the intermediate result Theorem \ref{weaktype1}.
Sections \ref{localization-section}, 
\ref{mixed-section},  \ref{mixed. t large}, and  \ref{smallt}
are devoted to the  proof of Theorem \ref{weaktype1}.
More precisely,
in Section \ref{localization-section}
we   
introduce a localization procedure
for those coordinates in which the variables $x$ and $u$
are close to each other.
In Section \ref{mixed-section}, we consider the remaining variables, and reduce the problem
to an ellipsoidal annulus.
A system of polar-like coordinates is also introduced.
Then
we prove 
in Section  \ref{mixed. t large} 
the weak type $(1,1)$
for that part of the maximal operator 
given by large
$t$.
Section \ref{smallt} is devoted to the more delicate
part corresponding to small  $t$.
Finally, in Section \ref{building}
we consider the  building blocks of an arbitrary
normal 
 Ornstein--Uhlenbeck semigroup,
  and deduce Theorem \ref{main-theorem}
from Theorem   \ref{mehler1111}, which is a slight generalization of Theorem
\ref{weaktype1}.

\medskip

In the following, 
we shall use the symbols
$c\,$ and $C$ with $0<c\,$, $C<\infty$
to denote constants 
which are not necessarily 
equal 
at 
different 
occurrences. They  depend only on the dimension and the parameters of the semigroup considered.
The symbol $\simeq$ between two positive expressions means that their
ratio is bounded above and below by such constants.
For  two positive quantities $a$ and $b$, we  write $a\lesssim b$  instead of $a\le C b$
and
$a\gtrsim b$  for $b\lesssim a$.
{The symbol $|E|$ will denote the Lebesgue measure of a measurable set $E$.}
By $\N$ we mean the set of all nonnegative integers. 
Finally, 
we write
{$\lfloor x\rfloor$
 to denote the greatest integer smaller than or equal to $x\in \R$.}
 \medskip

\noindent {\bf{Acknowledgements.}}
 The first and the second authors were partially supported by GNAMPA (Project 2016 
``Functional calculus for hypoelliptic operators on manifolds"
and 
Project 2017 ``Harmonic analysis and spectral theory of laplacians") and MIUR (PRIN 2016 ``Real and Complex Manifolds: Geometry, Topology
and Harmonic Analysis").
This research was carried out while the third author was visiting the
University of Padova, Italy. He would like to thank
the Department of Mathematics for the hospitality.
 
 \bigskip
 
\section{{Restriction to a  special case }}\label{particular}
In this and the following four sections, 
we consider
the case when
$Q=I$ and
\begin{equation}\label{Bdiag}
B=\text{diag}{\big(-\lambda_1, -\lambda_2, \ldots, -\lambda_n\big)}\,,\end{equation}
with $\lambda_j>0$ for $j=1,\ldots, n$.
We set $\lambda_{\mathrm{max}} =\mathrm{max}\, \lambda_j $
and 
$\lambda_{\mathrm{min}} =\mathrm{min} \, \lambda_j$.
 
Then the covariance matrices and the Gaussian measures are given by
$$Q_t=
\text{diag}{\left(
\frac{1}{2\lambda_1} (1-e^{-2 \lambda_1 t}),
\frac{1}{2\lambda_2} (1-e^{-2 \lambda_2 t}),
\ldots,
\frac{1}{2\lambda_n} (1-e^{-2 \lambda_n t})\right)
}$$
and
\begin{align*}
d\gamma_t (x)&
=\pi^{-\frac{n}{2}}
\frac{
\sqrt{\Pi_{j=1}^n
\lambda_j}}{\sqrt{\Pi_{j=1}^n(1-e^{-2 \lambda_j t})}
}
\exp \Big(
{-\sum_{j=1}^{n}
\frac{\lambda_j}{1-e^{-2 \lambda_j t}}\,x_j^2 
}\Big)
dx_1\ldots dx_n.\end{align*}
The invariant measure is
\begin{align}\label{measure}
d\gamma_\infty(x) &=\pi^{-\frac{n}{2}}
\sqrt{\Pi_{j=1}^n
\lambda_j}\,
\exp \Big(
{-\sum_{j=1}^{n}
{\lambda_j}x_j^2   }\Big)
dx_1\ldots dx_n.\end{align}

We denote the Ornstein--Uhlenbeck semigroup
simply by
 $\mathcal H_t$, suppressing the indices $Q,B$. 
It
may be written as
\begin{align*}
&\mathcal H_t
f(x)
=\pi^{-\frac{n}{2}}
\frac{
\sqrt{\Pi_{j=1}^n
\lambda_j}}{\sqrt{\Pi_{j=1}^n(1-e^{-2 \lambda_j t})}
}
\int
f(
e^{-t\lambda_1}x_1-y_1,
\ldots,
e^{-t\lambda_n}x_n-y_n
)
\\
&\qquad\qquad\qquad\qquad \times
\exp \Big(
-\sum_{j=1}^{n}
\frac{\lambda_j}{
1-e^{-2 \lambda_j t}
}
y_j^2  \Big)
\, dy_1\ldots dy_n.
\end{align*}
A straightforward computation
leads to
\begin{align*}
\mathcal H_t
f(x)
&=\frac{
\exp \Big({\sum_{j=1}^{n}
{\lambda_j}x_j^2   }\Big)
}{\sqrt{\Pi_{j=1}^n(1-e^{-2 \lambda_j t})}
}
\\
&\;\;\times
\int
f(
u_1,
\ldots,
u_n
)
\exp \Big(-{\sum_{j=1}^{n}\frac{\lambda_j}{1-e^{-2 \lambda_j t}}
(x_j-e^{-\lambda_j t}u_j)^2   }\Big)
 \, d\gamma_\infty(u_1,\ldots, u_n)
.
\end{align*}
We write this as
 \begin{align*}
 \mathcal H_t
f(x) &=
 \int
K_t
(x,u)\,
f(u)\,
 d\gamma_\infty(u)  
  \,,
\end{align*}
where  $K_t$ denotes  the Mehler  kernel,
given by 
\begin{align*}
K_t
(x,u)
&=\frac{
\exp \Big({\sum_{j=1}^{n}
{\lambda_j}x_j^2   }\Big)
}{\sqrt{\Pi_{j=1}^n(1-e^{-2 \lambda_j t})}
}
\exp
 \Big(-{\sum_{j=1}^{n}\frac{\lambda_j}{1-e^{-2 \lambda_j t}}
(x_j-e^{-\lambda_j t}u_j)^2 }\Big)
\,
\end{align*}
for $x,u\in\R^n$.
It is clearly the tensor product of
the one-dimensional kernels
\begin{align}\label{KtjQB}
K_{t,j}
(x_j,u_j)
&=
\frac{
\exp ({\lambda_j}x_j^2)  
}{\sqrt{1-e^{-2 \lambda_j t}}
}
\exp \Big(-{\frac{\lambda_j}{1-e^{-2 \lambda_j t}}
(x_j-e^{-\lambda_j t}u_j)^2 }\Big)
.
\end{align}
The maximal operator is
 \begin{align*}
\mathcal H_*
f(x)=
\sup_{t> 0}
\big|
\mathcal H_t
f(x)
\big|.
\end{align*}
 We will prove the following special case of Theorem \ref{main-theorem}.
\begin{theorem}\label{weaktype1}
If $Q=I$ and $B$ is diagonal and given by \eqref{Bdiag}, 
then 
$\mathcal H_*=\mathcal H_*^{I,B}
$
 is of weak type $(1,1)$ with respect to the invariant measure $\gamma_\infty$.
\end{theorem}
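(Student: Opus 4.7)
The plan is to split $\mathcal H_*$ into a local piece, controlled by a Hardy--Littlewood type maximal operator, and a global piece attacked directly by anisotropic geometric estimates. Since the Mehler kernel factors as a tensor product of the one-dimensional kernels \eqref{KtjQB}, I would localize coordinate by coordinate: for each subset $J \subset \{1,\ldots,n\}$, select the region where the $j$-th coordinate is ``close'' (meaning $|x_j - e^{-\lambda_j t} u_j|$ is small on the natural scale $\sqrt{(1 - e^{-2\lambda_j t})/\lambda_j}$) exactly for $j \in J$. On the close coordinates the kernel is dominated by a standard truncated Gaussian convolution which is bounded pointwise by the Hardy--Littlewood maximal function, of weak type $(1,1)$ with respect to the locally doubling $\gamma_\infty$. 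This reduces the theorem to a \emph{global} maximal operator in the remaining coordinates.

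Next, since $d\gamma_\infty(x) \propto e^{-\sum_j \lambda_j x_j^2}\,dx$, I would introduce the ellipsoidal radius $r = \bigl(\sum_j \lambda_j x_j^2\bigr)^{1/2}$ and polar-like coordinates $(r,\xi)$ on the ellipsoids $\{r = \text{const}\}$. The Gaussian factor in $\gamma_\infty$ forces the level set $\{\mathcal H_* f > \alpha\}$ to be effectively concentrated in a single ellipsoidal annulus $r \simeq \sqrt{\log(1/\alpha)}$, so after dyadic decomposition the problem reduces to a uniform bound on such an annulus. Inside the annulus I would split the supremum into the large-$t$ regime ($t \geq \delta$) and the small-$t$ regime. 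The large-$t$ part is relatively soft: for $t$ bounded below by a constant, the factors $e^{-\lambda_j t}$ are uniformly strictly less than one, $K_t(x,u)$ is comparable to a fixed bounded Gaussian expression, and a Schur-type estimate obtained by integration against $d\gamma_\infty(u)$ gives the required weak type (in fact strong) bound.

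The main obstacle, and the heart of the matter, is the small-$t$ regime, where the kernel is highly concentrated but the drifts $e^{-\lambda_j t} u_j$ move at different rates in different coordinates. Here I would study the phase
\[
\Phi_t(x,u) = \sum_{j=1}^n \frac{\lambda_j}{1 - e^{-2\lambda_j t}}\bigl(x_j - e^{-\lambda_j t} u_j\bigr)^2 - \sum_{j=1}^n \lambda_j x_j^2,
\]
whose minimum over $t$ defines an anisotropic ``distance'' between $x$ and $u$. The goal is to show that, for each $x$ on the ellipsoidal annulus and each $\alpha$, the set $\{u : \sup_{0 < t \leq \delta} K_t(x,u) > \alpha\}$ lies inside a thin tube whose $\gamma_\infty$-measure is $\lesssim \alpha^{-1}$ times the mass of $f$ on that tube, so that the weak $(1,1)$ inequality follows by a covering/kernel argument. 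The difficulty, which Mauceri and Noselli explicitly flagged as requiring new ideas, is that the critical time minimizing $\Phi_t$ depends in a non-trivial way on which coordinates of $x - e^{-\lambda_j t} u$ dominate, so the shape and the $\gamma_\infty$-measure of these level sets are genuinely anisotropic: they must be treated by a careful geometric case analysis on the ellipsoid, exploiting the polar-like coordinates to show that the ``bad'' tubes emanate from a small set on the unit ellipsoid in directions controlled by $x$, thereby replacing the explicit exponent analysis available when all $\lambda_j$ coincide.
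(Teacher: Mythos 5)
Your roadmap broadly matches the paper's: a coordinate-by-coordinate local/global split, a reduction of the global variables to the ellipsoidal annulus $R(\xi)\simeq\log\alpha$ (not $\log(1/\alpha)$ as you wrote), anisotropic polar-like coordinates on the ellipsoids, and a large-$t$/small-$t$ dichotomy with a soft large-$t$ estimate. Two details differ: the paper's local/global condition is $t$-independent, namely $|x_j-u_j|\lessgtr 1/(1+|x_j|)$, whereas you propose a $t$-dependent criterion; the $1/(1+|x_j|)$ scale is what makes the Gaussian density essentially constant on the localization rectangles, and a fixed or purely $\sqrt t$-type scale would not deliver that. The paper also does not invoke a Hardy--Littlewood bound on a doubling measure for the mixed local/global blocks; instead it covers the local variables by rectangles $\mathcal C_\nu$ of size $\simeq 1/(1+|s^{(\nu_j)}|)$ on which $\gamma_\infty$ is comparable to a constant, and proves a uniform per-rectangle weak-type bound.

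The genuine gap is in the small-$t$ regime, which you yourself identify as the heart of the matter but then describe only as a goal. Your formulation, that for each fixed $x$ the set $\{u:\sup_{t\le\delta}K_t(x,u)>\alpha\}$ lies in a thin tube of measure $\lesssim\alpha^{-1}$ times the mass of $f$, is not the right quantifier structure for a weak $(1,1)$ bound: what must be controlled is the $\gamma_\infty$-measure of a level set in $x$, and the superlevel set of the kernel in $u$ says nothing about where $\int K_t(x,\cdot)f\,d\gamma_\infty$ is large. The paper's actual mechanism is a greedy selection argument with \emph{forbidden zones}: after a dyadic decomposition of the kernel into pieces $\mathcal K_t^{m_1,m_2}$ supported where $|\xi-e^{-\lambda t}\eta|\simeq 2^{m_1}\sqrt t$ and $|x_{\mathrm{loc}}-u_{\mathrm{loc}}|\simeq 2^{m_2}\sqrt t$, one selects points $x^{(\ell)}$ in the level set by minimizing $R(\xi)$ outside the previously constructed forbidden zones $\mathcal Z^{(h)}$, which are anisotropic tubes emanating radially \emph{outward} along the flow $e^{\lambda s}$, $s>0$. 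To each $x^{(\ell)}$ one attaches a ball $\mathcal B^{(\ell)}$ in $u$-space; the two decisive facts are that the $\mathcal B^{(\ell)}$ are pairwise disjoint (Lemma \ref{lemma:disjoint}, using the anisotropic distance estimates of Lemma \ref{lemma-peter-coord}) and that $\gamma^k_\infty(\mathcal Z^{(\ell)})$ is controlled by the mass of $f$ on $\mathcal B^{(\ell)}$ via \eqref{mixed-bound-ell-1} together with the tube-measure estimate of Lemma \ref{lemma-Peter-forbidden}, which in turn requires the nontrivial area bound \eqref{area} for the slices $\Omega_s$ of the tube. Without the selection/disjointness structure and the tube-measure lemma, the anisotropic geometry alone does not produce the weak $(1,1)$ inequality, so the central step of your proposal is missing rather than merely unoptimized.
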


In the proof of this theorem,
 we distinguish between global and local variables.
For $k\in\{0,\ldots,n\}$
we 
define 
 \begin{align*}
M_k&=\{
(x,u)\in\R^n\times\R^n\,:  \,|x_j-u_j|> \frac{1}{1+|x_j|},\text{ $j=0,\ldots,k$}\,,\,\\
& \qquad\qquad\qquad\qquad\text{ and }\quad |x_j-u_j|\le \frac{1}{1+|x_j|},\text{ $j=k+1,\ldots,n$ }
\}.
\end{align*}
If $k=0$ or $k=n$, this means that the second or the first inequality, respectively, applies to all $j$.
We 
call the inequalities
$ |x_j-u_j|>
 \frac{1}{1+|x_j|}$ 
and
 $|x_j-u_j|\le \frac{1}{1+|x_j|}$
the global and the local condition, respectively. 
If  $(x,u)\in M_k$ for some $k\in\{0,\ldots,n\}$, we write
\begin{equation*}
x=(\xi,x_{\text{loc}}), \quad
\text{ with }
\quad
\xi=(x_1,\ldots,x_k)
\quad
\text{ and }
\quad
x_{\text{loc}}=(x_{k+1},\ldots,x_n).
\end{equation*}
Thus $x=x_{\mathrm{loc}}$
for $k=0$
and $x=\xi$ 
for $k=n$.
We use similar notation for $u$ and write 
\begin{equation*}
u=(\eta, u_{\text{loc}}), 
\quad
\text{ with }
\quad
\eta=(u_1,\ldots,u_k)
\quad
\text{ and }
\quad
u_{\text{loc}}=(u_{k+1},\ldots,u_n).
\end{equation*}

Then
let
\begin{align*}
\mathcal H_*^{k}
f(x)=\sup_{t> 0}
\big|
 \int
K_t
(x,u)\,\chi_{{M_k}}(x,u)\,f(u)
\,
 d\gamma_\infty(u) 
 \,
\big|\,,
\end{align*}
where $k\in\{0,\ldots,n\}$.

Observe that 
$\mathcal H_*^{0}$ is the local part of
$\mathcal H_*$.
 To prove Theorem \ref{weaktype1}, it is for obvious symmetry reasons 
 enough to show that each
 $\mathcal H_*^{k}$, \,\,$k=0, \ldots,n$, is
 of weak type $(1,1)$ with respect to $\gamma_\infty$.
 The proof is quite long and will be divided in several steps.

\bigskip

\section{The localization procedure}\label{localization-section}

We start by proving a simple estimate for the local coordinates.
\begin{lemma}\label{stima1glob}
If for some 
 $j\in\{1,\ldots,n\}$
the point
$(x_j,u_j) \in\R\times\R
$ 
satisfies the local condition 
$ |x_j-u_j|\le {1}/{(1+|x_j|)}$,
then
\begin{equation*}
\big|K_{t,j}
(x_j,u_j)\big|\lesssim
\,\frac{
\exp \Big({
\lambda_j x_j^2   }\Big)
}{ \big(\min (1, t)\big)^{1/2}
}
\exp{
 \Big(-c\,
 \frac{(
 x_j-u_j)^2}{\min (1, t)} \,
\Big)
}
\,,\qquad
\text{$t>0$.}
\end{equation*}

\end{lemma}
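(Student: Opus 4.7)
The plan is to estimate the one-variable Mehler kernel \eqref{KtjQB} by direct computation, using the local condition to control a cross term that otherwise grows with $|u_j|$. I would split the analysis into the regimes $t\ge 1$ and $t<1$, since the prefactor $(1-e^{-2\lambda_j t})^{-1/2}$ behaves very differently in each.

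First, for $t\ge 1$ the quantity $1-e^{-2\lambda_j t}$ is bounded above and below by positive constants depending only on $\lambda_j$, so the prefactor of $K_{t,j}$ is $\lesssim e^{\lambda_j x_j^2}$ and the exponential factor is bounded by $1$. Since the local condition forces $(x_j-u_j)^2\le 1$, the factor $\exp(-c(x_j-u_j)^2)$ on the right-hand side is bounded below, so the claimed bound is immediate in this regime.

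The interesting regime is $t<1$, where $1-e^{-2\lambda_j t}\simeq t$, so $\sqrt{1-e^{-2\lambda_j t}}\simeq\sqrt{t}=\sqrt{\min(1,t)}$ and $\lambda_j/(1-e^{-2\lambda_j t})\ge 1/(2t)$. After cancelling the prefactors, what remains to show is
\[
\frac{(x_j-e^{-\lambda_j t}u_j)^2}{t}\ge \frac{c\,(x_j-u_j)^2}{t}-C
\]
for constants $c>0$ and $C$ depending only on $\lambda_j$. To prove this, I would write
\[
x_j-e^{-\lambda_j t}u_j=(x_j-u_j)+(1-e^{-\lambda_j t})u_j,
\]
expand the square, and discard the nonnegative term $(1-e^{-\lambda_j t})^2u_j^2$. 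The key point is to bound the cross term
\[
\bigl|2(x_j-u_j)(1-e^{-\lambda_j t})u_j\bigr|\le 2\,|x_j-u_j|\,(1-e^{-\lambda_j t})\,|u_j|.
\]
Here the local hypothesis $|x_j-u_j|\le 1/(1+|x_j|)$ combined with the triangle inequality $|u_j|\le|x_j|+|x_j-u_j|\le|x_j|+1$ gives $|x_j-u_j|\cdot|u_j|\le 1$, while $1-e^{-\lambda_j t}\le\lambda_j t$; multiplying yields $2\lambda_j t$. Dividing by $t$ produces the additive constant $-2\lambda_j$, which is exactly the harmless $-C$ we need.

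The one subtlety that I expect to be the main obstacle, such as it is, is precisely this cross-term control: a crude bound would leave behind a term like $t\,u_j^2$, which is unbounded. Avoiding that requires coupling the two pieces of the local condition, namely both the size bound $|x_j-u_j|\lesssim 1/(1+|x_j|)$ and the resulting bound $|x_j-u_j||u_j|\lesssim 1$; this is exactly the role played by the factor $1/(1+|x_j|)$ in the definition of the local region, rather than an unweighted cutoff $|x_j-u_j|\le 1$. Once the cross term is tamed, combining the bounds in both regimes gives the claimed Gaussian estimate.
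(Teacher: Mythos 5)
Your proof is correct and takes essentially the same approach as the paper: expand $x_j-e^{-\lambda_j t}u_j$ around $x_j-u_j$, discard the nonnegative square, and use the local condition together with $|u_j|\le|x_j|+|x_j-u_j|$ to bound the cross term by an absolute constant. The paper avoids your case split at $t=1$ by working directly with the quotient $(x_j-e^{-\lambda_j t}u_j)^2/(1-e^{-2\lambda_j t})$ and the factorization $1-e^{-2\lambda_j t}=(1-e^{-\lambda_j t})(1+e^{-\lambda_j t})$, but the substance of the estimate is identical.
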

\begin{proof} 
The following argument is  well known,
see e.g.  \cite[proof of Lemma 5.3]{Mauceri-Noselli}.
We have
\begin{align}\label{local-bound}
&\frac{(x_j-e^{-\lambda_j t}u_j)^2}{1-e^{-2 \lambda_j t}}=
\frac{(x_j-u_j+u_j-e^{-\lambda_j t}u_j)^2}{1-e^{-2 \lambda_j t}}\notag\\
&\ge
\frac{(x_j-u_j)^2 -2|u_j| \,|x_j-u_j| (1-e^{-\lambda_j t})  }{1-e^{-2 \lambda_j t}}\notag\\
&\ge
\frac{(x_j-u_j)^2}{1-e^{-2 \lambda_j t}}
 -\frac{2|x_j| \,|x_j-u_j|   }{1+e^{- \lambda_j t}}
 -\frac{2(u_j-x_j)^2  }{1+e^{- \lambda_j t}}
\notag\\
&\ge
\frac{(x_j-u_j)^2}{1-e^{-2 \lambda_j t}}
 -
  \frac{2|x_j|}{1+|x_j|}
  -\frac{2  }{(1+|x_j|)^2}
\notag\\
&\ge
\frac{(x_j-u_j)^2}{1-e^{-2 \lambda_j t}}
 -
4 .
\end{align}
Inserting this in \eqref{KtjQB}, one obtains the desired conclusion.
\end{proof}
Next, we simplify
the problem by means of a localization process
for
 the local variables, covering $\R^{n-k}$
with suitable rectangles. Assume $0\le k<n$.
First we  split the real line 
into pairwise disjoint intervals of the type
\begin{equation*}
I_s =   \left(s-\frac1{1+|s|}, \: s+\frac1{1+|s|}\right].
\end{equation*}
Clearly, this can be done with values of $s$ in an increasing  sequence
$\big(s^{(\nu)}\big)_{\nu\in\Z}$. We claim that for each $s$
\begin{equation}\label{impl}
  s' \in I_s, \quad |s''-s'| \le \frac1{1+|s'|} \qquad \Rightarrow  \qquad
s'' \in 3I_s,
\end{equation}
where $3I_s$ denotes the concentric scaling of  $I_s$ by a factor 3.
Indeed, since
$ |s'-s| \le 1/{(1+|s|})$,
\begin{equation*}
 1+|s| \le 1+|s'| +\frac1{1+|s|} \le  2(1+|s'|)\,,
\end{equation*}
and it follows that
\begin{equation*}
 |s''-s| \le  |s''-s'| +  |s'-s| \le  \frac1{1+|s'|}+  \frac1{1+|s|} \le
 \frac3{1+|s|}.
\end{equation*}
Observe also that the scaled intervals $ 3I_{s^{(\nu)}},\: \nu \in \Z$,
have bounded overlap. 
A similar splitting was used in \cite{FGS}.

 Next, we apply this in each variable in $\R^{n-k}$, assuming $k<n$.
Denoting by $\nu= (\nu_{k+1},...,\nu_n) \in \Z^{n-k}$ a multiindex, we split 
 $\R^{n-k}$ into closed rectangles
\begin{equation*}
 \mathcal C_\nu
   =\prod_{j=k+1}^n
 \left[s^{(\nu_j)}-\frac1{1+|s^{(\nu_j)}|},\: 
s^{(\nu_j)}+\frac1{1+|s^{(\nu_j)}|}\right], \qquad \nu \in  \Z^{n-k},
\end{equation*}
with centers $s^\nu= (s^{(\nu_j)})_{j=k+1}^n$.
A consequence of \eqref{impl} is that
\begin{equation*}
  (x, u) \in M_k, \quad x_{\text{loc}}\in  \mathcal C_\nu\qquad 
\Rightarrow  \qquad u_{\text{loc}} \in \tilde{\mathcal C}_\nu,
\end{equation*}
where $\tilde{\mathcal C}_\nu =3\mathcal C_\nu$ is the concentric scaling. 
This implication
 assures  that the values of $\mathcal H_*^{k}f$
in $\R^k\times \mathcal C_\nu$
 only depend on the restriction of $f$ to $\R^k\times
 \tilde{\mathcal C_\nu}$.
Further, the
rectangles ${\mathcal C}_\nu$ 
are pairwise disjoint except for boundaries, and the
$\tilde{\mathcal C}_\nu$ have bounded overlap.

In each set
$\R^k\times
 \tilde{\mathcal C_\nu}$
 the Gaussian density varies  little
 with the 
local coordinates, in the following way.

\begin{lemma}\label{bounded-overlapping}
Let $\nu\in\Z^{n-k}$, $k\in\{0,\ldots, n-1\}$.
Then for any $u_{\text{loc}}\in \tilde {\mathcal C_\nu}$,
\begin{align*}
\exp\left(\sum_{j=k+1}^n\lambda_j u_j^2\right) \sim \exp(D_\nu),
\end{align*}
where
 $D_\nu=\sum_{j=k+1}^n\lambda_j (s^{(\nu_j)})^2 $.
\end{lemma}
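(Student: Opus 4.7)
The plan is to show that the exponents differ by only a bounded amount, i.e.\ that
\[
\left|\sum_{j=k+1}^n \lambda_j u_j^2 - D_\nu\right| \lesssim 1
\]
uniformly for $u_{\text{loc}} \in \tilde{\mathcal C}_\nu$. Once this is established, exponentiating gives the claimed equivalence $\exp\bigl(\sum_{j=k+1}^n\lambda_j u_j^2\bigr) \sim \exp(D_\nu)$, since the constants of comparability depend only on $n$ and on the $\lambda_j$'s.

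First I would fix $j \in \{k+1,\ldots,n\}$ and bound $|u_j^2 - (s^{(\nu_j)})^2|$ individually. By the definition of $\tilde{\mathcal C}_\nu = 3\mathcal C_\nu$, we have
\[
|u_j - s^{(\nu_j)}| \le \frac{3}{1+|s^{(\nu_j)}|},
\]
and factoring the difference of squares,
\[
\bigl|u_j^2 - (s^{(\nu_j)})^2\bigr| = |u_j - s^{(\nu_j)}|\,|u_j + s^{(\nu_j)}| \le \frac{3}{1+|s^{(\nu_j)}|}\left(2|s^{(\nu_j)}| + \frac{3}{1+|s^{(\nu_j)}|}\right).
\]
The right-hand side is bounded by an absolute constant, since both $|s^{(\nu_j)}|/(1+|s^{(\nu_j)}|)$ and $1/(1+|s^{(\nu_j)}|)^2$ are at most $1$. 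Multiplying by $\lambda_j \le \lambda_{\mathrm{max}}$ and summing over $j = k+1,\ldots,n$ yields the uniform bound on the difference of exponents.

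There is essentially no obstacle here; the only point to watch is that the estimate must be genuinely \emph{uniform} in $\nu$, in particular for large $|s^{(\nu_j)}|$, where one might naively worry that $u_j^2$ and $(s^{(\nu_j)})^2$ are both large. The factorization above makes precise that it is the geometric choice of the intervals $I_s$ with width $\sim 1/(1+|s|)$ that exactly compensates the growth of $u_j + s^{(\nu_j)}$, producing a bound independent of $\nu$. This is the whole point of the partition, and it is what makes the Gaussian density essentially constant on each rectangle $\tilde{\mathcal C}_\nu$.
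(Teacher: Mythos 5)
Your proof is correct and is the standard argument: the paper itself gives no proof, merely citing \cite[p.\,74]{Peter} as a well-known fact, and the difference-of-squares estimate you give is precisely what that reference does. The key point, that the interval width $\sim 1/(1+|s^{(\nu_j)}|)$ cancels the linear growth of $|u_j + s^{(\nu_j)}|$, is correctly identified and the resulting bound is genuinely uniform in $\nu$.
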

\begin{proof}
This is a well-known and simple fact (see, for example, \cite[p.\,74]{Peter}). 
\end{proof}
To prove Theorem \ref{weaktype1}, it suffices to show for each $k\in\{0,1,\ldots,n\}$
and each $\nu\in \Z^{n-k}$
that $\mathcal H^k_*$
maps $L^1 (	\R^k\times \tilde{\mathcal C}_\nu;\, d\gamma_\infty)$ boundedly
into
$L^{1,\infty} (\R^k\times {\mathcal C}_\nu; \,d\gamma_\infty)$,
uniformly in $\nu$.
Indeed, the bounded overlap of the
$\tilde{\mathcal C}_\nu$
will then allow summing in $\nu$.
In
 the case $k=n$, there is no need for the
${\mathcal C}_\nu$
and
$\tilde{\mathcal C}_\nu$.

With $\nu$ fixed, Lemma
\ref{bounded-overlapping}
then makes it natural
to replace 
$d\gamma_\infty$ 
by the measure
\begin{align*}
\misgausskd
(x) &=\pi^{-\frac{k}{2}}
\sqrt{\Pi_{j=1}^k
\lambda_j}\,
\exp \Big(
{-\sum_{j=1}^{k}
{\lambda_j}x_j^2   }\Big)
\,dx_1\ldots dx_k\, dx_{\text{loc}},
\end{align*}
where
$dx_{\text{loc}}=dx_{k+1}\ldots dx_{n}$. Observe that
$d \gamma^{n}_\infty=
d \gamma_\infty$.

We are now led
to the kernel 
\begin{align}\label{Mehler-Di}
K_t^{k,\nu}(x,u)
&=\frac{
\exp \Big({\sum_{j=1}^{k}
{\lambda_j}x_j^2   }\Big)
}{
\sqrt{\Pi_{j=1}^n(1-e^{-2 \lambda_j t})}
}\notag\\
&\times
\exp
 \Big(-{\sum_{j=1}^{n}\frac{\lambda_j}{1-e^{-2 \lambda_j t}}
(x_j-e^{-\lambda_j t}u_j)^2 }\Big)\,\chi_{M_k
 }(x,u)\,
 \chi_{
 \mathcal C_\nu
}(x_{\mathrm{loc}})
,
\end{align}
which vanishes for $u_{\text{loc}}\not\in \tilde {\mathcal C_\nu}$,
and to the operator
\begin{equation} \label{maxfcn}
\mathcal H_*^{k,\nu}
f(x)=\sup_{t> 0}
\big|
 \int
K_t^{k,\nu}
(x,u)\,f(u)\,
 \misgausskd(u) 
 \,
\big|.
\end{equation}

As easily verified by means of a small computation, 
Theorem \ref{weaktype1} can be rephrased 
as follows.
\begin{theorem}\label{th:revisited} Let $k\in\{0,\ldots,n\}$.
For all functions  $f\in L^1( \misgaussk)$
\begin{equation} \label{thesis-mixed-Di}  
 \misgaussk
 \{x : \mathcal H_*^{k,\nu}f(x) > \alpha\} \lesssim \frac1\alpha\,
\|f\|_{L^1( \misgaussk)},\qquad \text{ $\alpha>0$,}
\end{equation}
uniformly in  $\nu\in \Z^{n-k}$. 
\end{theorem}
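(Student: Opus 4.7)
The plan is to peel off the local coordinates first, reducing $\mathcal H_*^{k,\nu}$ to an essentially global maximal operator in the variables $\xi,\eta$, and then to handle that operator by splitting the supremum according to whether $t$ is large or small. For the local indices $j=k+1,\ldots,n$, Lemma \ref{stima1glob} dominates each one-dimensional factor of $K_t^{k,\nu}$ by $\exp(\lambda_j x_j^2)$ times a Gauss--Weierstrass type heat kernel in $x_j-u_j$ at scale $\min(1,t)^{1/2}$. Taking the product over the local indices, $K_t^{k,\nu}$ factorizes (up to constants) into a purely global piece, involving only the indices $j\le k$, times a heat kernel $h_t(x_{\mathrm{loc}}-u_{\mathrm{loc}})$ and an exponential in the local $x_j^2$. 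Lemma \ref{bounded-overlapping} shows that the factor $\exp\bigl(\sum_{j>k}\lambda_j u_j^2\bigr)$ inside $\misgausskd(u)$ is essentially constant on $\tilde{\mathcal C}_\nu$, so it combines with the exponential in $x$ to give an $O(1)$ contribution. The supremum in $t$ of the local convolution is then dominated pointwise by a centered Hardy--Littlewood maximal function in the local variables, which is of weak type $(1,1)$.

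It then remains to establish a weak type $(1,1)$ bound, with respect to $\misgausskd$, for the genuinely global maximal operator in $(\xi,\eta)\in\R^k\times\R^k$ that survives after integrating out the local variables. Following the roadmap of Sections \ref{mixed-section}--\ref{smallt}, I would introduce polar-like coordinates adapted to the ellipsoidal level sets of the quadratic form $\sum_{j\le k}\lambda_j(x_j-e^{-\lambda_j t}u_j)^2/(1-e^{-2\lambda_j t})$, exploiting the fact that the mass of the kernel is concentrated on a thin ellipsoidal annulus around $\xi\approx e^{-t\Lambda}\eta$, where $\Lambda=\mathrm{diag}(\lambda_j)_{j=1}^{k}$. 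I would then further split $t\ge t_0$ from $0<t<t_0$ for some small fixed $t_0$. In the large $t$ regime all factors $1-e^{-2\lambda_j t}$ are bounded below, the kernel has uniform estimates, and weak type $(1,1)$ follows from the invariance of $\misgaussk$ together with a Jensen / Cauchy--Schwarz argument.

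The main obstacle is the small $t$ regime when the $\lambda_j$ are not all equal: no single dilation commutes with the drift $e^{tB}$, the ellipsoidal annulus supporting the kernel is not self-similar, and the classical forbidden-region argument of \cite{Peter,Soria,JLMS} does not apply verbatim. This is precisely the difficulty Mauceri and Noselli flagged as requiring new ideas. I expect the resolution to involve a careful covering of this annulus by boxes on which $t$ is essentially fixed and the drifted distance $|\xi-e^{-t\Lambda}\eta|$ is comparable to a single scale, together with a sharp bound on how many distinct such $t$-scales can simultaneously contribute to the supremum for a given pair $(\xi,\eta)$. Once the geometry is under control, a Calder\'on--Zygmund type argument against $\misgaussk$, using the Gaussian decay of the invariant measure to beat the polynomial factors introduced by the change of variables, should close the proof of Theorem \ref{th:revisited}.
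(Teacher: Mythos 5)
The central step in your plan---factor the kernel into a local heat-kernel piece times a global piece, bound the sup over $t$ of the local convolution by a centered Hardy--Littlewood maximal function, and then close the global part by a separate weak type $(1,1)$ estimate---cannot work. Dominating the local convolution by $M_{\mathrm{HL}}f(\eta,x_{\mathrm{loc}})$ and then applying a weak $(1,1)$ bound for the global maximal operator would require the replaced function $M_{\mathrm{HL}}f$ to be in $L^1$, which it is not; more generally, the composition of two weak type $(1,1)$ maximal operators is not weak type $(1,1)$. The paper never decouples the local and global suprema in this way. In the small-$t$ analysis (Proposition~\ref{stima-tipo-debole-misto}), the regions $\mathcal S^{m_1,m_2}_t$ pin down both $|\xi-e^{-\lambda t}\eta|$ and $|x_{\mathrm{loc}}-u_{\mathrm{loc}}|$ at scales $2^{m_1}\sqrt t$ and $2^{m_2}\sqrt t$ with the \emph{same} $t$, and the greedily chosen points $x^{(\ell)}$ come with forbidden zones $\mathcal Z^{(\ell)}$ and balls $\mathcal B^{(\ell)}$ that are simultaneously localized in both groups of variables at radii proportional to $\sqrt{t_\ell}$; tying the local and global scales to the same $t_\ell$ is exactly what makes the disjointness of the $\mathcal B^{(\ell)}$ (Lemma~\ref{lemma:disjoint}) and the measure bound on the $\mathcal Z^{(\ell)}$ close the argument.

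Beyond that, the small-$t$ part of your proposal correctly identifies the difficulty (anisotropic dilations are not self-similar with respect to the ellipsoids $R(\xi)=\beta$) but offers no mechanism to resolve it; ``a covering by boxes on which $t$ is essentially fixed'' is a hope, not a construction. What actually replaces self-similarity in the paper is (i) the prior restriction of $\xi$ to the fixed annulus $\mathcal E$ of \eqref{crown}, valid once $\alpha$ is large, which your sketch omits; (ii) Lemma~\ref{lemma-peter-coord}, showing the anisotropic flow $s\mapsto e^{\lambda s}\tilde\xi$ is uniformly transverse to the level sets of $R$ on that annulus and giving quantitative lower bounds on Euclidean distances in terms of the polar-like coordinates $(s,\tilde\xi)$; and (iii) Lemma~\ref{lemma-Peter-forbidden}, the tube-measure estimate $\mu_R(Z)\lesssim a^{k-1}e^{-\beta}/\sqrt\beta$, which is what makes the forbidden zones cheap in $\misgaussk$. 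These three ingredients are absent from your proposal, and without them the ``Calder\'on--Zygmund type argument against $\misgaussk$'' you gesture at has nothing to hang on. As a smaller point, the large-$t$ case (Proposition~\ref{propo-mixed-glob}) is settled not by Jensen or Cauchy--Schwarz but by monotonicity in the radial coordinate $s$ together with the equality condition at the threshold $s=s_\alpha(\tilde\xi)$.
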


We first show that Theorem \ref{th:revisited} holds in 
the (entirely local) case $k=0$.
\begin{proposition}\label{propo-locale}
The maximal operator $\mathcal H_*^{0,\nu}
$ is of weak type $(1,1)$, uniformly in $\nu$.
\end{proposition}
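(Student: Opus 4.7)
The plan is to reduce the proposition to the classical weak type $(1,1)$ bound for the Hardy--Littlewood maximal operator with respect to Lebesgue measure. When $k=0$, the empty sum $\sum_{j=1}^0 \lambda_j x_j^2$ vanishes, so $\misgaussk$ coincides with Lebesgue measure on $\R^n$, and the kernel $K_t^{0,\nu}$ in \eqref{Mehler-Di} carries no exponentially growing factor in $x$.

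First I would factor the kernel as the tensor product
\begin{equation*}
K_t^{0,\nu}(x,u)=\prod_{j=1}^n e^{-\lambda_j x_j^2}K_{t,j}(x_j,u_j)\cdot\chi_{M_0}(x,u)\,\chi_{\mathcal C_\nu}(x)
\end{equation*}
and apply Lemma \ref{stima1glob} in every coordinate, which is legitimate since each coordinate of a point in $M_0$ satisfies the local condition. This yields the Gauss--Weierstrass type bound
\begin{equation*}
K_t^{0,\nu}(x,u)\lesssim\frac{1}{(\min(1,t))^{n/2}}\exp\!\left(-c\,\frac{|x-u|^2}{\min(1,t)}\right)\chi_{\mathcal C_\nu}(x)\,\chi_{\tilde{\mathcal C}_\nu}(u),
\end{equation*}
the indicator in $u$ being a consequence of the implication \eqref{impl}. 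In particular, $\mathcal H_*^{0,\nu}f$ is supported in $\mathcal C_\nu$ and depends only on the restriction of $f$ to $\tilde{\mathcal C}_\nu$.

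Next I would observe that $\min(1,t)$ runs over $(0,1]$ as $t$ varies in $(0,\infty)$, so $\mathcal H_*^{0,\nu}f(x)$ is pointwise dominated, up to a constant, by the Gauss--Weierstrass maximal operator
\begin{equation*}
\sup_{s>0}\int_{\R^n}\frac{1}{s^{n/2}}\exp\!\left(-c\,\frac{|x-u|^2}{s}\right)|f(u)|\,du.
\end{equation*}
Its kernel is a radial, decreasing and integrable function of $x-u$ at every scale $s$, with an $L^1$-norm that is independent of $s$, so the standard approximate-identity argument controls it pointwise by a constant multiple of the centred Hardy--Littlewood maximal function $Mf(x)$ on $\R^n$.

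Finally, I would invoke the classical weak type $(1,1)$ bound for $M$ with respect to Lebesgue measure to obtain \eqref{thesis-mixed-Di} in the case $k=0$, with a constant independent of $\nu$. There is no substantive obstacle in this entirely local regime; the genuine difficulty of Theorem \ref{th:revisited} lies in the mixed and fully global cases $k\ge 1$, where the interaction between the Gaussian invariant density and the Mehler kernel has to be handled with some care.
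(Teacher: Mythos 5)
Your argument is correct and follows essentially the same route as the paper: after reducing to the Gauss--Weierstrass bound via Lemma \ref{stima1glob} (which the paper states explicitly), the paper simply cites ``standard methods'' and \cite[Section 3]{JLMS}, which amount to precisely the domination by the Hardy--Littlewood maximal function that you carry out. Your additional observation that $\gamma^0_\infty$ is Lebesgue measure, so that no weight-versus-measure mismatch arises when $k=0$, is the right justification for why the classical weak type $(1,1)$ theorem applies directly.
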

\begin{proof} 
Lemma \ref{stima1glob}
implies that for $(x,u)\in M_0$, 
$x\in 
 {\mathcal C}_\nu$ and $u\in \tilde {\mathcal C}_\nu
$
\begin{align*}
\big|
K_t^{0,\nu}
(x,u)
\big|
&\lesssim
\frac{
1}{ \big(\min (1, t)\big)^{n/2}
}
\exp{
 \Big(-c\,
 \frac{
|x-u|^2}{\min (1, t)} \,
\Big)
}
\,,\,\,\, \qquad t>0.
\end{align*}
Standard methods now allow us 
 to estimate
$\mathcal H_*^{\text{0},\nu}f$ 
in 
$L^{1,\infty} (\mathcal C_\nu)$ in terms of the norm of $f$ in
$L^1 (
\tilde{ \mathcal C}_\nu)
$.
For further details, see
for example
 \cite[Section 3]{JLMS}.
\end{proof}
When proving Theorem \ref{th:revisited}
for $k>0$,  we can
 assume that 
 $f$ is nonnegative,  supported in $\R^k\times\tilde {\mathcal C}_\nu$
 and normalized in the sense  that
 \begin{equation*}
 \|f\|_{L^1( \misgaussk)}=1.
 \end{equation*}
The level set in \eqref{thesis-mixed-Di}
 is contained in
 $\R^k\times  \mathcal C_\nu$, and 
$ \gamma_\infty^k(\R^k\times \mathcal C_\nu) \lesssim 1$.
We may  assume  that $\alpha $ is large, since \eqref{thesis-mixed-Di}  is trivial in  the opposite case.
The meaning of ``large" here will be specified later and will depend only on the dimension and the parameters of the semigroup.

\bigskip

\section{{Some elliptic geometry}}\label{mixed-section}

\subsection{
Reduction to an ellipsoidal annulus}\label{restriction}
We  simplify the proof of  Theorem~\ref{th:revisited} 
by restricting the global variables to an  ellipsoidal annulus, defined in terms of 
the quadratic form 
\begin{equation}\label{quadratic-form}
R(\xi) = \sum_{j=1}^k \lambda_j x_j^2,
\end{equation}
where  $\xi=(x_1,\ldots,x_k)$. 
Fixing
 a large $\alpha$,
 we shall see  that 
it is not restrictive to assume that $x=(\xi, x_{\mathrm{loc}})$
in \eqref{thesis-mixed-Di} 
is such that 
 $\xi $
is in 
the set
\begin{equation}\label{crown}
{\mathcal E}=\{
\xi \in\R^k:\, \frac12  \log \alpha\le 
R(\xi)
\le 2  \log \alpha\,
\}.
\end{equation}
We first consider the set of points not verifying the inequality
$R(\xi)\le
 2\log \alpha
$, which satisfies
\begin{align}\label{restrizione-prima}
\misgaussk
\{
(\xi,x_{\text{loc}})\in
\R^k\times \mathcal C_\nu:
\,R( \xi)>
2 \log \alpha
\}
&\lesssim
\big|
\mathcal C_\nu\big|
\int_{ R(\xi)>
 2 \log \alpha }
\exp
(-R(
\xi)  )
d\xi\, 
\notag\\
&
\lesssim
{ (2 \log \alpha)^{(k-2)/2}}\,\exp (
{-
 2 \log \alpha })
\notag\\
&
\lesssim \frac1\alpha;
 \end{align}
 to get the second inequality here, one uses polar coordinates after the change of variables $x_j'=x_j \sqrt{\lambda_j}$.

 Further, we claim that for any  $(x,u)\in M_k$, 
\begin{equation}\label{claim}
 R(\xi) < \frac12 \log \alpha\; \qquad \Rightarrow \;\qquad 
K_t^{k,\nu}(x,u) \lesssim \alpha.
\end{equation}
This requires a lemma which will also be useful later;
recall that
$x = (\xi, x_{\mathrm{loc}})$.
\begin{lemma}\label{lemma-sost-peter}
  If $(x,u)\in M_k$ and $0<t\le 1$, then
\begin{equation*}
\frac1{(1+|\xi|)^2} \lesssim t^2 |\xi|^2  +
\sum_1^k (x_j-e^{-\lambda_jt} u_j)^2.
\end{equation*}
\end{lemma}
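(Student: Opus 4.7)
The plan is to exploit the defining condition of $M_k$, namely $|x_j - u_j| > 1/(1+|x_j|)$ for $j = 1, \ldots, k$, which (since $|x_j| \le |\xi|$) gives $(x_j - u_j)^2 \ge 1/(1+|\xi|)^2$ for each such $j$. The task is then to bound $\sum_{j=1}^k (x_j - e^{-\lambda_j t} u_j)^2$ from below in terms of $\sum_j (x_j - u_j)^2$, up to an error controlled by $t^2 |\xi|^2$.

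The key algebraic step would be the identity
\[
x_j - e^{-\lambda_j t} u_j = e^{-\lambda_j t}(x_j - u_j) + (1 - e^{-\lambda_j t}) x_j,
\]
which isolates the difference $x_j - u_j$ while keeping only $x_j$ (and not $u_j$) in the remainder. Setting $A = e^{-\lambda_j t}(x_j - u_j)$ and $B = (1 - e^{-\lambda_j t}) x_j$, I would use the elementary inequality $(A+B)^2 \ge A^2/2 - B^2$, together with the bounds $e^{-2\lambda_j t} \ge e^{-2\lambda_{\mathrm{max}}}$ and $1 - e^{-\lambda_j t} \le \lambda_j t \le \lambda_{\mathrm{max}} t$ valid on $0 < t \le 1$, to obtain
\[
(x_j - e^{-\lambda_j t} u_j)^2 \gtrsim (x_j - u_j)^2 - t^2 x_j^2.
\]
Summing over $j = 1, \ldots, k$ and applying the lower bound $(x_j - u_j)^2 \ge 1/(1+|\xi|)^2$ would then yield
\[
\sum_{j=1}^k (x_j - e^{-\lambda_j t} u_j)^2 \gtrsim \frac{1}{(1+|\xi|)^2} - t^2 |\xi|^2,
\]
and a rearrangement gives the claim.

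The main point is the choice of decomposition. The more obvious splitting $x_j - u_j = (x_j - e^{-\lambda_j t} u_j) - (1 - e^{-\lambda_j t}) u_j$ would produce an error term of size $t^2 u_j^2$, which is not directly controlled by $|\xi|^2$; converting it via $u_j^2 \le 2 x_j^2 + 2 (x_j - u_j)^2$ would reintroduce $(x_j - u_j)^2$ on the right-hand side and force an absorption argument valid only for $t$ small enough that $C t^2 < 1/2$, leaving the remaining range $t \simeq 1$ to be handled by a separate direct estimate. The decomposition suggested above avoids this case distinction and delivers the inequality in one step for all $0 < t \le 1$.
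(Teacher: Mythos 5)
Your proof is correct and rests on the same algebraic identity used in the paper: the paper writes $x_j-u_j=(1-e^{\lambda_jt})x_j+e^{\lambda_jt}(x_j-e^{-\lambda_jt}u_j)$, which after multiplying through by $e^{-\lambda_jt}$ is exactly your decomposition $x_j-e^{-\lambda_jt}u_j=e^{-\lambda_jt}(x_j-u_j)+(1-e^{-\lambda_jt})x_j$. The only cosmetic difference is that the paper applies the triangle inequality to $\frac1{1+|\xi|}\le\sum|x_j-u_j|$ and squares at the very end, whereas you square first and invoke $(A+B)^2\ge A^2/2-B^2$; both give the lemma for all $0<t\le 1$ without any case split.
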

\begin{proof}
  From the definition of $M_k$ we have
\begin{align*}
\frac1{1+|\xi|}& \le \sum_1^k |x_j- u_j|
= \sum_1^k |(1-e^{\lambda_jt})x_j + e^{\lambda_jt} x_j - u_j|\\
&\lesssim t  \sum_1^k
|x_j| + \sum_1^k e^{\lambda_jt}|x_j- e^{-\lambda_jt}u_j|
\lesssim t|\xi| + \sum_1^k |x_j-e^{-\lambda_jt} u_j|.
\end{align*}
The lemma follows.
\end{proof}
To verify
 \eqref{claim}, we first assume that $t> 1$. Then because of \eqref{Mehler-Di}
\begin{equation*}
K_t^{k,\nu}(x,u) \lesssim  \, e^{R(\xi)} <  \sqrt{\alpha}
\le  \alpha,
\end{equation*}
since $\alpha$ is large.
In the case when  $t\le 1$,  we have   
  \begin{equation*}
K_t^{k,\nu}(x,u) 
 \lesssim   \frac{e^{R(\xi)}}{t^{n/2}} 
\exp
 \Big(-c\sum_{j=1}^{k}\frac{(x_j-e^{-\lambda_j t}u_j)^2}t
 \Big).
\end{equation*}
It follows from Lemma \ref{lemma-sost-peter} that 
 \begin{equation*}
 t^2 \gtrsim \frac1{(1+|\xi|)^4} \qquad  \text{or}  \qquad 
\sum_{j=1}^{k}\frac{(x_j-e^{-\lambda_j t}u_j)^2}t 
\gtrsim \frac1{(1+|\xi|)^2t}.
\end{equation*}
The first inequality here implies that
\begin{equation*}
K_t^{k,\nu}(x,u)
 \lesssim    e^{R(\xi)}\, (1+|\xi|)^n
 \lesssim   e^{2R(\xi)} <   \alpha.
\end{equation*}
If the second inequality holds, we have
\begin{equation*}
K_t^{k,\nu}(x,u) \lesssim  \frac{e^{R(\xi)}}{t^{n/2}} 
\exp
 \Big(-\frac c{(1+|\xi|)^2t}
 \Big)
 \lesssim    e^{R(\xi)}\, (1+|\xi|)^n,
\end{equation*}
and  the same estimate follows. Thus 
\eqref{claim} is verified.

Replacing $\alpha$ by $C\alpha$ for some $C$, we see from  \eqref{restrizione-prima} 
and
\eqref{claim}
that
we can assume
 $\xi
\in \mathcal E$
 in the 
proof of  Theorem \ref{th:revisited}.

\subsection{Polar-like coordinates in $\R^k$.}\label{section:polar}
Fix $\beta>0$
  and consider the ellipsoid
\begin{equation*}
E_\beta
=\{\xi\in\R^k: R(\xi)=\beta
\}.\end{equation*}
We introduce the anisotropic dilations
$$e^{\lambda s}\,\xi=
 (e^{\lambda_j\, s}\,x_j)_{ j=1}^k.$$ Then each $\xi\in\R^k\setminus\{ 0\}$ may be written in a unique way as
$\xi =e^{\lambda s}\, \tilde \xi $ with $s \in \R$ and
$\tilde \xi 
=(\tilde \xi_j )_{j=1}^k \in E_\beta$.
Thus $x=(\xi, x_{\mathrm{loc}})\in\R^n$ 
is given by
\begin{equation}\label{def-coord}
x=(e^{\lambda s}\tilde \xi,x_{\mathrm {loc}}) 
\,.
\end{equation}
The Lebesgue measure $d\xi$ in $\R^k$ satisfies
\begin{equation}\label{equiv:lebesgue}
d\xi\simeq |e^{\lambda s}\tilde \xi|
\,ds\, dS(\tilde\xi),
\end{equation}
where
$dS$ is the area measure of the ellipsoid $E_\beta$.
Indeed, we will see 
in the next subsection
that  the curve $s\mapsto e^{\lambda s}\tilde \xi$ is transverse to the family of ellipsoids defined by $R(\xi)$.

In the following result, we estimate the distance between two points in terms of the coordinates $s$, $\tilde \xi$.
\begin{lemma}\label{lemma-peter-coord}
 Let  $\xi^{(0)},\; \xi^{(1)}\in \R^k\setminus \{ 0\} $ 
and assume $ R(\xi^{(0)}) > \beta/2$.
Write  $\xi^{(0)} = e^{\lambda s^{(0)}}\, \tilde \xi^{(0)}$
and  $\xi^{(1)} = e^{\lambda s^{(1)}}\, \tilde \xi^{(1)}$ with  $s^{(0)}$, $s^{(1)}\in \R$ and
$\tilde \xi^{(0)},\; \tilde \xi^{(1)} \in E_\beta$. 
\\
(a)
Then
\begin{equation}
  \label{lem1}
  |\xi^{(0)} - \xi^{(1)}| \ge c\,  |\tilde \xi^{(0)} - \tilde \xi^{(1)}|.
\end{equation}
(b)
If also $s^{(1)} \ge 0$, then 
\begin{equation}
  \label{lem2}
  |\xi^{(0)} - \xi^{(1)}| \ge c\,\sqrt\beta\,|s^{(0)} -s^{(1)}|.
\end{equation}
\end{lemma}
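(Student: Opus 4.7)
The proof rests on an elementary consequence of the hypothesis: from $R(\xi^{(0)})>\beta/2$ and $R(\tilde\xi^{(0)})=\beta$ one deduces that $s^{(0)}$ is bounded below by a constant depending only on $\lambda_{\min}$. Indeed, when $s^{(0)}<0$,
\begin{equation*}
R(\xi^{(0)})=\sum_{j=1}^k \lambda_j e^{2\lambda_j s^{(0)}}(\tilde x_j^{(0)})^2 \le e^{2\lambda_{\min} s^{(0)}}\beta,
\end{equation*}
forcing $e^{2\lambda_{\min} s^{(0)}}>1/2$. In particular each $e^{\lambda_j s^{(0)}}\ge c>0$ and $|\xi^{(0)}|\gtrsim\sqrt\beta$. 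In part (b), the assumption $s^{(1)}\ge 0$ directly gives $R(\xi^{(1)})\ge \beta$ and the analogous lower bounds.

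For part (b), I would factor
\begin{equation*}
\xi^{(0)}-\xi^{(1)} = e^{\lambda s^{(0)}}\bigl(\tilde\xi^{(0)} - e^{\lambda\sigma}\tilde\xi^{(1)}\bigr), \qquad \sigma = s^{(1)}-s^{(0)},
\end{equation*}
which combined with $\min_j e^{\lambda_j s^{(0)}}\gtrsim 1$ gives $|\xi^{(0)}-\xi^{(1)}|\gtrsim |\tilde\xi^{(0)}-e^{\lambda\sigma}\tilde\xi^{(1)}|$. I may assume $\sigma\ge 0$: if instead $\sigma<0$, then $s^{(0)}>s^{(1)}\ge 0$ implies $R(\xi^{(0)})\ge\beta$, and the roles of the two points can be interchanged. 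Since $R(e^{\lambda\sigma}\tilde\xi^{(1)})\ge e^{2\lambda_{\min}\sigma}\beta$ while $R(\tilde\xi^{(0)})=\beta$, the triangle inequality for the $R$-norm gives
\begin{equation*}
\sqrt{R\bigl(\tilde\xi^{(0)}-e^{\lambda\sigma}\tilde\xi^{(1)}\bigr)} \ge \sqrt{R(e^{\lambda\sigma}\tilde\xi^{(1)})}-\sqrt{R(\tilde\xi^{(0)})} \ge \sqrt\beta\,(e^{\lambda_{\min}\sigma}-1) \ge \sqrt\beta\,\lambda_{\min}\,\sigma,
\end{equation*}
and using $|v|\ge\sqrt{R(v)/\lambda_{\max}}$ to return to the Euclidean norm completes the proof of \eqref{lem2}.

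For part (a), I would perform a case analysis on the size of $R(\xi^{(1)})$. If $R(\xi^{(1)})<\beta/4$, then $|\xi^{(1)}|\le C\sqrt\beta$, while $|\xi^{(0)}|\gtrsim\sqrt\beta$ with enough margin so that $|\xi^{(0)}-\xi^{(1)}|\gtrsim\sqrt\beta$; since $|\tilde\xi^{(0)}-\tilde\xi^{(1)}|\le 2\sqrt{\beta/\lambda_{\min}}$, inequality \eqref{lem1} follows. If $R(\xi^{(1)})\ge\beta/4$, then the same elementary bound applies to $\xi^{(1)}$ and both $e^{\lambda s^{(i)}}$ have entries $\gtrsim 1$. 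When $|\sigma|$ is also bounded from above, both $e^{\lambda s^{(i)}}$ have entries $\simeq 1$, and a direct coordinate-by-coordinate computation on the decomposition
\begin{equation*}
x_j^{(0)}-x_j^{(1)}=e^{\lambda_j s^{(0)}}(\tilde x_j^{(0)}-\tilde x_j^{(1)})-(e^{\lambda_j s^{(1)}}-e^{\lambda_j s^{(0)}})\tilde x_j^{(1)}
\end{equation*}
gives the required estimate. When $|\sigma|$ is large, one of $|\xi^{(0)}|$ or $|\xi^{(1)}|$ dominates the other, so that $|\xi^{(0)}-\xi^{(1)}|\gtrsim\sqrt\beta\gtrsim|\tilde\xi^{(0)}-\tilde\xi^{(1)}|$.

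The main technical obstacle I expect is the bounded-$|\sigma|$ sub-case of (a): in the Euclidean metric the angular displacement $e^{\lambda s^{(0)}}(\tilde\xi^{(0)}-\tilde\xi^{(1)})$ and the radial displacement $(e^{\lambda s^{(1)}}-e^{\lambda s^{(0)}})\tilde\xi^{(1)}$ are generally not orthogonal, and one must show that they cannot cancel, essentially by exploiting the bi-Lipschitz nature of $(s,\tilde\xi)\mapsto e^{\lambda s}\tilde\xi$ on compact pieces of $\R\times E_\beta$. Part (b) is considerably cleaner once the $R$-norm triangle inequality is invoked.
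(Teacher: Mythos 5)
Your argument for part (b) is correct and, in fact, more direct than the paper's: the paper treats (a) and (b) simultaneously by bounding from below the length of an arbitrary curve joining $\xi^{(0)}$ to $\xi^{(1)}$, while you exploit the $R$-norm triangle inequality together with the dilation-invariance $R(e^{\lambda\sigma}\eta)\ge e^{2\lambda_{\min}\sigma}R(\eta)$ to get a one-line estimate. The swap argument handling $\sigma<0$ is also sound, since $s^{(0)}>s^{(1)}\ge 0$ then ensures the hypotheses are satisfied with the roles reversed. This is a genuinely different, and arguably cleaner, route to \eqref{lem2}.

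Part (a), however, has a real gap precisely where you flag it. In the bounded-$|\sigma|$ sub-case you reduce to bounding $\bigl|e^{\lambda s^{(0)}}(\tilde\xi^{(0)}-\tilde\xi^{(1)})-(e^{\lambda s^{(1)}}-e^{\lambda s^{(0)}})\tilde\xi^{(1)}\bigr|$ from below by $|\tilde\xi^{(0)}-\tilde\xi^{(1)}|$, and you correctly observe that the angular and radial increments need not be orthogonal, so cancellation is possible a priori. Invoking ``the bi-Lipschitz nature of $(s,\tilde\xi)\mapsto e^{\lambda s}\tilde\xi$ on compact pieces'' is not a proof of this: it is exactly the content of the lemma restated, and establishing it with constants uniform in $\beta$ requires an actual argument (say, a rescaling to $E_1$ followed by compactness, or a quantitative computation). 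The paper resolves this point at the heart of its proof: writing an arbitrary curve as $\Gamma(\tau)=e^{\lambda s(\tau)}\tilde\xi(\tau)$, the tangent decomposes as $s'(\tau)\,\lambda\tilde\xi(\tau)+\tilde\xi'(\tau)$ inside the dilation $e^{\lambda s(\tau)}$, and the crucial observation is that $\lambda\tilde\xi(\tau)$ is \emph{normal} to $E_\beta$ while $\tilde\xi'(\tau)$ is \emph{tangent}, hence they are orthogonal and no cancellation occurs. Your secant decomposition has no analogous orthogonality, which is why the direct estimate stalls. Also note that your statement that $R(\xi^{(1)})<\beta/4$ and $R(\xi^{(0)})>\beta/2$ force $|\xi^{(0)}-\xi^{(1)}|\gtrsim\sqrt\beta$ should itself go through the $R$-triangle inequality rather than comparing Euclidean norms $|\xi^{(i)}|$ directly, since the $\lambda_j$ need not be equal; the idea is right but the wording obscures what is actually being used.

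In short: (b) is a valid and nicer alternative; (a) is an outline that is missing its central estimate, and that estimate is exactly the orthogonality mechanism which the paper's curve argument supplies.
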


\begin{proof} 
Let $\Gamma: [0,1] \to \R^k$ be a
  differentiable curve with  $\Gamma(0) =\xi^{(0)}$ and  $\Gamma(1) =\xi^{(1)}$.
It is clearly enough to bound  the length of any such curve from
below by the right-hand sides of \eqref{lem1} and \eqref{lem2}.

For each $\tau \in [0,1]$, we write  
$\Gamma(\tau) =e^{\lambda s(\tau)}\,\tilde \xi(\tau)$ with $\tilde \xi(\tau) =(\tilde \xi_j (\tau))_1^k \in E_\beta$, so that $s(0)=s^{(0)}$ and  $s(1)=s^{(1)}$. 
 The tangent vector is
\begin{equation*}
  \Gamma'(\tau) = \left( s'(\tau)\, \lambda_j\, e^{\lambda_j\, s(\tau)}\, \tilde \xi_j(\tau) 
+ e^{\lambda_j \,s(\tau)}\, \tilde \xi_j'(\tau) \right)_{j=1}^k\,,
\end{equation*}
and
\begin{eqnarray*}
  |\Gamma'(\tau)|^2 &=&\sum_1^k e^{2\lambda_j \,s(\tau)}\, 
\left(s'(\tau)\,\lambda_j \, \tilde \xi_j(\tau) 
+  \tilde \xi_j'(\tau) \right)^2  \\ 
&\ge&
\min_j e^{2\lambda_j\, s(\tau)}\, |s'(\tau)\,\lambda \, \tilde \xi(\tau)
+ \tilde \xi'(\tau)|^2,
\end{eqnarray*}
where $\lambda \, \tilde \xi(\tau)$ denotes the vector 
$(\lambda_j \, \tilde \xi_j(\tau))_{j=1}^k$. This vector is normal to $E_\beta$ at
$\tilde \xi(\tau)$ and so orthogonal to the tangent vector $\tilde \xi'(\tau)$, and
we conclude that
\begin{align}\label{2}
 |\Gamma'(\tau)|^2 \ge \min_j e^{2\lambda_j\, s(\tau)}\,
\left(s'(\tau)^2  \,|\lambda \, \tilde \xi(\tau)|^2 + |\tilde \xi'(\tau)|^2  \right).
\end{align}

We need a lower estimate of $ s(0)$. If  $s{(0)} < 0$,  the assumption 
 $ R(\xi^{(0)}) > \beta/2$ implies that
 \begin{equation*}
   \beta/2 < \sum_j \lambda_je^{2\lambda_j\, s{(0)}}(\tilde \xi_j^{(0)})^2  \le
e^{2\lambda_{\min}\,\, s{(0)}} \, R(\tilde \xi^{(0)}) = e^{2\lambda_{\min}\, s{(0)}} \, \beta\,.
 \end{equation*}
Thus we always have 
 \begin{equation*}
   s{(0)} >  -\tilde s,
 \end{equation*}
 where $\tilde s=\log 2/{(2\lambda_{\min})} $.
 
Assume now that $ s(\tau) > -2\tilde s$ for all $\tau \in [0,1]$.
Then the minimum in \eqref{2} stays away from $0$ and we get 
\begin{equation*}
 |\Gamma'(\tau)| \gtrsim |s'(\tau)|  \,|\lambda \, \tilde \xi(\tau)|
 \gtrsim \sqrt\beta\, |s'(\tau)|
 \end{equation*}
and 
\begin{equation*}
 |\Gamma'(\tau)| \gtrsim  |\tilde \xi'(\tau)|.
 \end{equation*}
Integrating each of these two estimates with respect to $\tau$ in $[0,1]$, we see that the length of $\Gamma$
is bounded below by the right-hand sides of  \eqref{lem2} and \eqref{lem1}. 

If instead   $ s(\tau) \le -2\tilde s$ for some  $\tau \in [0,1]$,
 the image $s([0,1])$ contains the interval 
$[-2\tilde s, \max (s(0), s(1))]$.
Then we can find a
closed subinterval $I \subset [0,1] $ such that for  $\tau \in I$
\begin{equation*}
 -2\tilde s \le s(\tau) \le \max (s(0), s(1))
 \end{equation*}
and, moreover, equality holds in the left-hand inequality here at one endpoint
of $I$ and in the right-hand  inequality at the other endpoint.
For the length of $\Gamma$, we now have, in view of  \eqref{2},
\begin{equation*}
 \int_0^1 |\Gamma'(\tau)|\,d\tau \ge \int_I |\Gamma'(\tau)|\,d\tau \gtrsim 
 \sqrt\beta\, \int_I |s'(\tau)|\,d\tau \ge  \sqrt\beta\: 
\big(\max\, (s(0), s(1))\; + \;2\tilde s\big).
\end{equation*}
Since $s(0) > -\tilde s$, the last quantity here is larger than $\sqrt\beta\,|\tilde s| \gtrsim \sqrt\beta \sim \mathrm{diam}\: E_\beta$. Thus the length of the 
curve is bounded below by the right-hand side of \eqref{lem1}.
If we also assume  $s^{(1)} \ge 0$, the same is true with  \eqref{lem1}
replaced by \eqref{lem2}, since then
\begin{equation*}
 \max (s(0), s(1)) +2\tilde s \ge  |s(0)- s(1)|.
\end{equation*}
The proof of the lemma 
is complete.\end{proof}

\subsection{
The Gaussian measure of a tube}\label{restriction}
We will need a geometric, $k$-dimensional lemma.
In $\R^k$ we write  points as $\xi=
(x_j)_{j=1}^k$ and use the measure
\begin{equation*}
d\mu_R(\xi) = e^{-R(\xi)}\,d\xi\,,
\end{equation*}
where $R(\xi)$ was defined in \eqref{quadratic-form}.
Recall that  $e^{\lambda t}\,\xi = (e^{\lambda_j t}\,x_j)_{j=1}^k$
and that $\alpha>0$ is large.

We fix $\beta$ with $\frac12 \log \alpha \le \beta \le 2 \log \alpha$
and 
consider a spherical cap of the ellipsoid $E_\beta$, centered at some point  $\xi^{(1)}\in E_\beta$.
Explicitly, we define
\begin{equation*}
  \Omega = \{\xi \in \R^k: R(\xi) = \beta, \; |\xi - \xi^{(1)}| < a \}
\end{equation*}
with $a>0$.
Observe that $|\xi| \simeq\sqrt \beta $ for  $\xi \in \Omega$.
Then we define the tube 
\begin{equation}   \label{zona}
Z =  \{e^{\lambda s}\,\xi: s>0,\;\xi \in \Omega \}.
\end{equation}

\begin{lemma}\label{lemma-Peter-forbidden}
The $\mu_R$
measure   of $Z$ satisfies 
\begin{equation*}
\mu_R(Z)\lesssim
\frac{a^{k-1}}{\sqrt{ \beta}}\, e^{-\beta}.
\end{equation*}
\end{lemma}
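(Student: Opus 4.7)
My plan is to integrate in the polar-like coordinates $(s,\tilde\xi)$ of \eqref{def-coord}, since by construction $Z$ corresponds precisely to $(0,\infty)\times\Omega$. Applying \eqref{equiv:lebesgue} turns the estimate into
\begin{equation*}
\mu_R(Z)\simeq\int_{\Omega}\int_0^\infty e^{-R(e^{\lambda s}\tilde\xi)}\,|e^{\lambda s}\tilde\xi|\,ds\,dS(\tilde\xi),
\end{equation*}
so the task separates into a one-variable radial integral and the $(k-1)$-dimensional surface area of $\Omega$.

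For the inner integral I would fix $\tilde\xi\in\Omega\subset E_\beta$ and consider $r(s):=R(e^{\lambda s}\tilde\xi)=\sum_{j=1}^k\lambda_j\, e^{2\lambda_j s}\tilde x_j^2$. Then $r(0)=\beta$ and
\begin{equation*}
\frac{dr}{ds}=2\sum_{j=1}^k\lambda_j^2\, e^{2\lambda_j s}\tilde x_j^2\;\ge\;2\lambda_{\min}\,r,
\qquad
|e^{\lambda s}\tilde\xi|^2\;\le\;\frac{r}{\lambda_{\min}},
\end{equation*}
so $r$ is a smooth increasing bijection of $[0,\infty)$ onto $[\beta,\infty)$. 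Substituting $r$ for $s$ gives $|e^{\lambda s}\tilde\xi|\,ds\lesssim dr/\sqrt r$, whence the inner integral is bounded by $\int_\beta^\infty e^{-r}/\sqrt r\,dr\le e^{-\beta}/\sqrt\beta$, using the crude tail bound $1/\sqrt r\le 1/\sqrt\beta$. The decisive $1/\sqrt\beta$ gain comes from the $|e^{\lambda s}\tilde\xi|$ factor in the Jacobian, which cancels exactly one power of $r$.

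To finish I need $|\Omega|\lesssim a^{k-1}$. Passing to the rescaled variables $y_j=\sqrt{\lambda_j}\,x_j$ turns $E_\beta$ into the sphere of radius $\sqrt\beta$ and distorts Euclidean distances on $\R^k$ only by bounded factors. A spherical cap of Euclidean radius $\simeq a$ has surface area $\lesssim a^{k-1}$ when $a\lesssim\sqrt\beta$, while for larger $a$ the trivial bound $|E_\beta|\simeq\beta^{(k-1)/2}\lesssim a^{k-1}$ takes over. Multiplying the two estimates yields the lemma. The main obstacle is keeping the Jacobian change $s\mapsto r$ sharp; a cruder estimate — for instance dropping the $|e^{\lambda s}\tilde\xi|$ factor outright — would miss the $1/\sqrt\beta$ gain and weaken the conclusion.
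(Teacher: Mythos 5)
Your approach is in the same spirit as the paper's: integrate in the polar-like coordinates $(s,\tilde\xi)$, gain a factor $1/\sqrt\beta$ from the radial Jacobian, and bound $|\Omega|\lesssim a^{k-1}$. The last step is fine, and the substitution trick for the inner integral is clean. However, there is a genuine gap in the very first step.

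You apply \eqref{equiv:lebesgue} uniformly over $s\in(0,\infty)$, but that equivalence does not hold with constants independent of $s$ when $k\ge 2$. A direct computation of the Jacobian of the map $(s,\tilde\xi)\mapsto e^{\lambda s}\tilde\xi$ (or equivalently, tracking the paper's own derivation leading up to \eqref{muz}) gives
\[
d\xi \;=\; e^{\sum_{j=1}^k\lambda_j s}\,\bigl|\lambda\tilde\xi\bigr|\,ds\,dS(\tilde\xi),
\]
and $e^{\sum_j\lambda_j s}|\lambda\tilde\xi|$ is larger than your claimed density $|e^{\lambda s}\tilde\xi|$ by a factor on the order of $e^{(\sum_j\lambda_j-\lambda_{\max})s}$, which grows exponentially in $s$ as soon as $k\ge 2$. (Already the two-dimensional isotropic case $\lambda_1=\lambda_2$ shows the discrepancy: the true Jacobian is $e^{2s}\beta\,ds\,d\theta$ but your right-hand side gives $e^{s}\beta\,ds\,d\theta$.) The paper only invokes \eqref{equiv:lebesgue} in Proposition \ref{propo-mixed-glob}, where $\xi\in\mathcal E$ forces $|s|\lesssim 1$, so the $s$-dependence of the constants is harmless there; inside the proof of Lemma \ref{lemma-Peter-forbidden} it redoes the Jacobian from scratch precisely because $s$ is unbounded. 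Consequently your identity $\mu_R(Z)\simeq\int_\Omega\int_0^\infty e^{-R(e^{\lambda s}\tilde\xi)}|e^{\lambda s}\tilde\xi|\,ds\,dS$ underestimates the true measure, and your subsequent bound $\int_\beta^\infty e^{-r}r^{-1/2}\,dr\le e^{-\beta}/\sqrt\beta$ does not by itself yield the claimed upper bound for $\mu_R(Z)$.

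The missing ingredient is the control of the area distortion: the paper proves $1\le|\Omega_s|/|\Omega|\le e^{(k-1)\lambda_{\max}s}$ (see \eqref{area}) and $|F_s(v)|\lesssim e^{\lambda_{\max}s}\sqrt\beta$, producing a total growth factor $e^{k\lambda_{\max}s}$, and then absorbs this into the decay $e^{-R(e^{\lambda s}\tilde\xi)}\le e^{-(1+2\lambda_{\min}s)\beta}$, using that $\lambda_{\min}\beta>k\lambda_{\max}$ because $\alpha$ (hence $\beta$) is assumed large. With the correct Jacobian you would obtain
$\int_0^\infty e^{-(1+2\lambda_{\min}s)\beta}\,e^{\sum_j\lambda_j s}\,\sqrt\beta\,ds\lesssim \sqrt\beta\,e^{-\beta}/\beta= e^{-\beta}/\sqrt\beta$,
which recovers the statement, but this absorption step, and the consequent reliance on $\beta$ large, is exactly what your argument omits; your cruder substitution $s\mapsto r(s)$ converged without that hypothesis precisely because the growing factor had been dropped at the outset.
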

\begin{proof}
For   $s\ge 0$ the set
\begin{equation*}  
\Omega_s =\{e^{\lambda s}\,\xi: \xi \in \Omega \}
\end{equation*}
  is a slice of $Z$. The selfadjoint linear map 
\begin{equation*}    
F_s\,:\,\xi \mapsto e^{\lambda s}\xi 
\end{equation*}
 is a bijection
between  $\Omega$  and $\Omega_s$. 
To estimate $\mu_R(Z)$, we need an estimate of 
 the area $\big| \Omega_s\big|$ of the $(k-1)$-dimensional surface 
$\Omega_s$.

 A normal vector to  $\Omega_0 =\Omega $ at the point $\xi \in \Omega $ is
$v  = (\lambda_j x_j)_{j=1}^k$, and the tangent hyperplane at   $\xi$ 
is $v^\perp$. For   $s > 0$ the  tangent hyperplane of $\Omega_s$
at the point  $F_s(\xi)$ is  $F_s(v^\perp)$. Thus a normal to $\Omega_s$
at the same point is  
$w = F_s^{-1}(v) =  (e^{-\lambda_j s}\lambda_j x_j)_{j=1}^k$.
The angle $\psi(s,\xi)$ between $w$ and  
$F_s(v) =  (e^{\lambda_j s}\lambda_j x_j)_{j=1}^k$ is given by 
\begin{equation*}
  \cos \psi(s,\xi) = \frac{w\cdot F_s(v) }{\|w \|\, \|F_s(v) \|}
=  \frac{\sum_1^k \lambda_j^2 x_j^2}
{\sqrt{\sum_1^k e^{-2\lambda_j s}\lambda_j^2 x_j^2 }\: 
\sqrt{\sum_1^k  e^{2\lambda_j s}\lambda_j^2 x_j^2 }}.
\end{equation*}
We remark that 
this  shows that 
$ \cos \psi(s,\xi) $ stays away from zero; this yields the transversality
 mentioned in the preceding subsection.

Since  $F_s(v) = \partial F_s(\xi)/\partial s$,
the distance from a point  $ F_s(\xi)\in \Omega_s $ to $ \Omega_{s+h}$ in the normal
direction is, for small $h>0$, essentially
\begin{equation*}
h | F_s(v)| \cos\psi(s,\xi).
\end{equation*}
Thus the Lebesgue measure in $Z$ is given by  $| F_s(v)| \cos\psi(s,\xi)\,dS_s\,ds$,
where $dS_s$ denotes the $(k-1)$-dimensional area measure of $ \Omega_s$.
It follows that 
\begin{equation}\label{muz}
\mu_R(Z)  = \int_0^\infty \int_{\Omega_s}
| F_s(v)| \cos\psi(s,\xi)\, e^{-R(e^{\lambda s}\xi)}\,dS_s\,ds.
\end{equation}

To evaluate this, we must first estimate the area $|\Omega_s|$.
 The area of  $\Omega$ can be approximated by that of  a union of 
small $(k-1)$-dimensional simplices, i.e. small convex $k$-gons, tangent to  $\Omega$. Similarly,
that of  $\Omega_s$ is approximated by 
the images under $F_s$ of these simplices. Let  $S$ be such a simplex,
situated in the tangent hyperplane of   $\Omega$ at the point  $\xi \in \Omega$
and containing  $\xi$. We shall compare its area $|S|$  with the
area  $|F_s(S)|$  of its image. 
With $v$ as before and $\varepsilon >0$, the convex hull of
 $S$ and the point $\xi + \varepsilon v$ is a  $k$-dimensional simplex  $S_\varepsilon$. Its volume is  $|S_\varepsilon| = \varepsilon |S| |v|$. Its image
$F_s(S_\varepsilon)$ is spanned by $F_s(S)$ and  
$F_s(\xi)+\varepsilon F_s(v)$, and
so has volume 
$|F_s(S_\varepsilon)| =  \varepsilon |F_s(S)| |F_s(v)|\cos\psi(s,\xi)$.

On the other hand, the quotient $|F_s(S_\varepsilon)|/|S_\varepsilon|$
equals the Jacobian of $F_s$, which is $\exp(\sum_1^k \lambda_\nu s)$.
Combining, one finds that
\begin{align*}
\frac{|F_s(S)|}{|S|} = 
\frac{\exp\left(\sum_1^k \lambda_\nu s\right)\,|v|}{|F_s(v)|\cos\psi(s,\xi)}
= &\,\exp\left(\sum_1^k \lambda_\nu s\right)\, \frac{\sqrt{\sum_1^k e^{-2 \lambda_j s}\lambda_j^2 x_j^2}}{\sqrt{\sum_1^k \lambda_j^2 x_j^2}}\\
= &\, \frac{\sqrt{\sum_{j=1}^k \exp\big[
{2(\sum_{\nu=1}^k \lambda_\nu -\lambda_j) s}\big]\, \lambda_j^2 x_j^2}}{\sqrt{\sum_1^k \lambda_j^2 x_j^2}}.
\end{align*}
It follows that 
\begin{equation*}
1 \le \frac{|F_s(S)|}{|S|} \le  e^{(k-1)\lambda_{\mathrm{max}}\, s}.
\end{equation*}
Summing over small simplices, we conclude that also
\begin{equation}\label{area}
1 \le \frac{| \Omega_s|}{|\Omega|} \le  e^{(k-1)\lambda_{\mathrm{max}}\, s},
\end{equation}
for any $s>0$.

Next, we estimate the factors in \eqref{muz}, 
still
assuming  $s>0$. First,  $ | F_s(v)| \le  e^{\lambda_{\mathrm{max}}\, s} |v|$ and
$ |v| \simeq |\xi| \simeq \sqrt \beta $, so that
\begin{equation*}
  | F_s(v)|  \lesssim  e^{\lambda_{\mathrm{max}}\, s} \sqrt \beta.
\end{equation*}
Further,
\begin{equation*}
R(e^{\lambda s}\xi)  = \sum_j \lambda_j   e^{2\lambda_j s}\, x_j^2
\ge  \sum_j
 \lambda_j (1+2\lambda_{\mathrm{min}}\,s)\, x_j^2
= (1+2\lambda_{\mathrm{min}}\,s)\, R(\xi) = (1+2\lambda_{\mathrm{min}}\,s)\beta,
\end{equation*}
since $R(\xi) = \beta$.

Inserted in \eqref{muz}, these two estimates 
 lead to
\begin{equation*}
\mu_R(Z)  \lesssim  
\sqrt{\beta} \,e^{-\beta} \int_0^\infty 
e^{\lambda_{\mathrm{max}}\, s - 2\lambda_{\mathrm{min}}\,\beta s}\, \int_{\Omega_s}
  \,dS_s\,ds. 
\end{equation*}
The inner integral here is $| \Omega_s|$, so we can use \eqref{area}
and observe that $| \Omega| \lesssim a^{k-1}$,  to get
\begin{equation*}
\mu_R(Z)  \lesssim  
\sqrt{\beta} \,e^{-\beta}\, a^{k-1} \int_0^\infty 
e^{(k \lambda_{\mathrm{max}} - 2\lambda_{\mathrm{min}}\,\beta) s}\,\,ds. 
\end{equation*}
We can assume that $\alpha$  is so large that {
$\lambda_{\mathrm{min}}\,\beta >k \lambda_{\mathrm{max}}$}, and then the last integral will
be less than  $1/(\lambda_{\mathrm{min}}\,\beta) \sim 1/\beta$, which proves the assertion.
\end{proof}

\bigskip

\section{The case of  large $t$.}\label{mixed. t large}
We prove part of 
 Theorem \ref{th:revisited},
 considering the supremum in \eqref{maxfcn} taken only 
over $t> 1$.
\begin{proposition}\label{propo-mixed-glob}
Let $k\in\{1,\ldots, n\}$. 
Then the maximal operator $$
\sup_{t> 1}
\big|
 \int_{ \R^n}
K_t^{k,\nu}
(x,u)\,  f(
u
)\, 
 \misgausskd
(u) 
 \,
\big|\,$$
 is of weak type $(1,1)$ with respect to the invariant measure
 $\misgaussk
$, uniformly in $\nu\in \Z^{n-k}$.
\end{proposition}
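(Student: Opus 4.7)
The plan is to exploit two features of the large-$t$ regime: the prefactor $\prod_{j=1}^n(1-e^{-2\lambda_j t})^{-1/2}$ is bounded between positive constants, and the ``center'' $e^{-\lambda t}\xi$ of the Gaussian bump in the global $u$-variables stays in a bounded region independent of $t$. Concretely, for $t>1$ one has $1-e^{-2\lambda_j t}\in[1-e^{-2\lambda_j},1)$, so
\[
K_t^{k,\nu}(x,u)\lesssim e^{R(\xi)}\exp\!\Bigl(-c\sum_{j=1}^n(x_j-e^{-\lambda_j t}u_j)^2\Bigr)\chi_{M_k}(x,u)\chi_{\mathcal C_\nu}(x_{\mathrm{loc}}).
\]

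The key rewriting is the elementary identity
\[
\lambda_j x_j^2-\frac{\lambda_j(x_j-e^{-\lambda_j t}u_j)^2}{1-e^{-2\lambda_j t}}
=\lambda_j u_j^2-\frac{\lambda_j(e^{-\lambda_j t}x_j-u_j)^2}{1-e^{-2\lambda_j t}}
\]
applied for $j=1,\dots,k$. This turns the $e^{R(\xi)}$ blow-up into an $e^{R(\eta)}$ factor which then cancels against the Gaussian density in $d\gamma_\infty^k(u)$. After this cancellation I would obtain, still for $t>1$,
\[
\mathcal H_t^{k,\nu}f(x)\lesssim\int_{\R^k\times\tilde{\mathcal C}_\nu}\exp\!\Bigl(-c|e^{-\lambda t}\xi-\eta|^2-c\sum_{j>k}(x_j-e^{-\lambda_j t}u_j)^2\Bigr)f(u)\,du,
\]
now integrated against ordinary Lebesgue measure.

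Next I would take the supremum in $t>1$. For $\xi\in\mathcal E$ the orbit $\{e^{-\lambda t}\xi:t>1\}$ lies in the bounded ball $B(0,e^{-\lambda_{\min}}|\xi|)\subset B(0,C\sqrt{\log\alpha})$, and the local Gaussian factor is at most $1$ (in fact exponentially small when $|s^{(\nu_j)}|$ is large, by the argument of Lemma \ref{bounded-overlapping}). Writing $F(\eta):=\int_{\tilde{\mathcal C}_\nu}f(\eta,u_{\mathrm{loc}})\,du_{\mathrm{loc}}$, one has
\[
\sup_{t>1}\mathcal H_t^{k,\nu}f(x)\lesssim\sup_{y\in B(0,C\sqrt{\log\alpha})}(G*F)(y)\lesssim\sup_{y\in B}M_{\mathrm{HL}}F(y),
\]
where $G$ is a standard Gaussian on $\R^k$ and $M_{\mathrm{HL}}$ the centered Hardy--Littlewood maximal operator. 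The weak type $(1,1)$ of $M_{\mathrm{HL}}$, combined with the bound $\|F\|_{L^1(d\eta)}\lesssim e^{D_\nu}\|f\|_{L^1(\gamma_\infty^k)}$ from Lemma \ref{bounded-overlapping} and the compactness of $\mathcal C_\nu$, should yield the desired weak-type estimate uniformly in $\nu$.

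The main obstacle I anticipate is passing cleanly from the Lebesgue-based weak type $(1,1)$ of $M_{\mathrm{HL}}$ to a weak type bound with respect to $\gamma_\infty^k$ on the set $\mathcal E\times\mathcal C_\nu$: the Gaussian weight $e^{-R(\xi)}$ varies between $\alpha^{-2}$ and $\alpha^{-1/2}$ across the annulus, and a naive summation over dyadic shells $\{R(\xi)\in[\beta,\beta+1]\}$ produces a logarithmic loss of order $(\log\alpha)^{(k-2)/2}$. Removing this loss will likely require exploiting the one-parameter structure of the orbit $t\mapsto e^{-\lambda t}\xi$ (which contracts towards $0$), so that the relevant ``decision'' of whether $\sup_{t>1}\mathcal H_t^{k,\nu}f(x)>\alpha$ depends only on $R(\xi)$ and on a fixed maximal function of $F$ on a bounded region, allowing one to integrate the Gaussian weight over the annulus and recover the clean $1/\alpha$ bound.
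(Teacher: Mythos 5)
Your cancellation identity is exact and turns the $e^{R(\xi)}$ prefactor into $e^{R(\eta)}$, which then cancels against the Gaussian factor in $d\gamma_\infty^k$. But this step is where the argument runs into two genuine problems. First, after the cancellation the integration is against Lebesgue measure $du$, and you then invoke $\|F\|_{L^1(d\eta)}\lesssim e^{D_\nu}\|f\|_{L^1(\gamma_\infty^k)}$ via Lemma~\ref{bounded-overlapping}. That lemma controls the \emph{local} Gaussian density $\exp(\sum_{j>k}\lambda_j u_j^2)$ on $\tilde{\mathcal C}_\nu$; but the discrepancy between $\int f\,du$ and $\|f\|_{L^1(\gamma_\infty^k)}\sim\int f\,e^{-R(\eta)}\,du$ is the factor $e^{R(\eta)}$ in the \emph{global} variables, which is unbounded on $\R^k\times\tilde{\mathcal C}_\nu$. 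So $\|F\|_{L^1(d\eta)}$ is simply not controlled by $\|f\|_{L^1(\gamma_\infty^k)}$, and the Hardy--Littlewood weak type has nothing to bite on. Second, your bound $\sup_{t>1}\mathcal H_t^{k,\nu}f(x)\lesssim\sup_{y\in B}M_{\mathrm{HL}}F(y)$ is a constant in $x$; the level set would therefore be either empty or the whole annulus $\mathcal E\times\mathcal C_\nu$, whose $\gamma_\infty^k$-measure is of order $(\log\alpha)^{(k-2)/2}\alpha^{-1/2}$ --- a power loss $\alpha^{1/2}$, not the logarithmic loss you anticipate. You rightly observe that the one-parameter orbit structure must be exploited to close the gap, but that is precisely the content of the proof, and you leave it as a sketch.

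The paper's argument goes the opposite way: it \emph{keeps} the prefactor $e^{R(\xi)}$, writes $\xi=e^{\lambda s}\tilde\xi$ and $\eta=e^{\lambda s'}\tilde\eta$ in the polar-like coordinates of Section~\ref{section:polar} with $\beta=\log\alpha$, and applies Lemma~\ref{lemma-peter-coord}(a) to obtain $\sum_{j\le k}\lambda_j(x_j-e^{-\lambda_j t}u_j)^2\gtrsim|\tilde\xi-\tilde\eta|^2$. This makes the integral $\int\exp(-c|\tilde\xi-\tilde\eta|^2)f\,d\gamma_\infty^k(u)$ independent of both $t$ and the radial coordinate $s$ of $\xi$, while the prefactor $e^{R(e^{\lambda s}\tilde\xi)}$ is strictly increasing in $s$. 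The level set in $s$ is therefore a half-line $\{s>s_\alpha(\tilde\xi)\}$ with equality at the threshold; along it $e^{-R}$ decays at rate $\gtrsim\log\alpha$, giving
\[
\sqrt{\log\alpha}\int_{s>s_\alpha(\tilde\xi)}e^{-R(e^{\lambda s}\tilde\xi)}\,ds\ \lesssim\ (\log\alpha)^{-1/2}\,e^{-R(e^{\lambda s_\alpha(\tilde\xi)}\tilde\xi)},
\]
and the threshold identity converts $e^{-R(e^{\lambda s_\alpha(\tilde\xi)}\tilde\xi)}$ into $\alpha^{-1}\int\exp(-c|\tilde\xi-\tilde\eta|^2)f\,d\gamma_\infty^k$; Fubini in $\tilde\xi$ finishes. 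This simultaneously handles the $L^1(\gamma_\infty^k)$ normalization and the tube structure that your proposal leaves open, and it is essential to the proof rather than a final technicality.
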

\begin{proof}
As before,
 $f$ is nonnegative,  supported in $\R^k\times\tilde {\mathcal C}_\nu$ and
normalized in ${L^1( \misgaussk)}$. 
We need only consider
points $x=(\xi,  x_{\text{loc}})\in 
 \mathcal E
 \times \mathcal C_\nu$ and $u=(\eta, u_{\text{loc}})\in \R^k\times
 \tilde{\mathcal C_\nu}$.
Moreover, we shall use  for both  $x$ and $u$ 
 the coordinates introduced in  \eqref{def-coord} with $\beta=\log\alpha$, that is,
\begin{equation*}
\xi=e^{\lambda s}\tilde \xi,
\qquad
\eta=e^{\lambda s'}
\tilde \eta, 
\end{equation*}
where $\tilde \xi, \tilde \eta
\in E_{\log \alpha}$ and  $s,s' \in\R$.
Observe here that $|s| < C$, since $\xi \in \mathcal E$.
Then \eqref{Mehler-Di}
 and  
 the fact that $t> 1$
imply
\begin{align*}
K_t^{k,\nu}
(x,u)
&\lesssim 
\exp 
(
R(\xi))
\exp
 \Big(
 -{\sum_{j=1}^{k}{\lambda_j}
(x_j-e^{-\lambda_j t}u_j)^2 }\Big).
\end{align*}
Since 
$\xi\in\mathcal E$
and $e^{-\lambda t}\eta=e^{\lambda (s'-t)}\tilde \eta$,
we can apply  
Lemma \ref{lemma-peter-coord}\,$(a)$ 
getting
\begin{align*}{\sum_{j=1}^{k}{\lambda_j}
(x_j-e^{-\lambda_j t}u_j)^2 }
\ge
\lambda_{\mathrm{min}}
\big|
{ \xi-
e^{-\lambda t}\eta}
\big|^2
\gtrsim
\big|
\tilde \xi-\tilde \eta\big|^2,
\end{align*}
so that
\begin{align*}
K_t^{k,\nu}
(x,u)
&\lesssim   \exp
(R(\xi))\exp
 \big(
- c\,\big|
\tilde \xi- \tilde \eta\big|^2
\big) .
\end{align*}
By integrating we obtain 
\begin{align*}
\int
 K_t^{k,\nu}
(x,u)
f(u)\,\misgausskd
 (u)
&\lesssim
\exp
 \big(
R (e^{\lambda s}\,\tilde \xi)
 \big)
\int 
\exp
 \big(
- c\, \big|
\tilde \xi- \tilde \eta\big|^2
\big)\,f(u)\,\misgausskd
 (u).
\end{align*}
The right-hand side here is increasing in $s$,
and therefore the inequality
\begin{equation}\label{equality-s-alfa}
\exp
 \big(
R (e^{\lambda s}\,\tilde \xi)
 \big)
\int
\exp
 \big(
-c\,
\big|
\tilde \xi- \tilde \eta\big|^2
\big)\,f(u)\,\misgausskd
 (u)
>
\alpha
\end{equation} holds
if and only if $s>
s_\alpha (\tilde \xi)$
for some 
$s_\alpha (\tilde \xi)$,
with equality for $s=s_\alpha (\tilde \xi)$.
Since $\alpha>1$ and the last integral is less than $\| f\|_{L^1 (\gamma_\infty^k)}=1$, it follows that 
$s_\alpha (\tilde \xi)>0$.

We see that the set
of $x$ where the supremum in the statement of Proposition
\ref{propo-mixed-glob}
is larger than
 $C\alpha$ for some $C$
 is contained in the set $\mathcal A^{k,\nu}(\alpha)$
of points $(\xi,x_{\mathrm{loc}}
)\in
 \mathcal E
\times
\mathcal C_\nu\!
$ satisfying \eqref{equality-s-alfa}.

Applying  \eqref{equiv:lebesgue}, where now
$\big|
e^{\lambda s}\tilde \xi
\big|\simeq 
\sqrt{\log\alpha}$
and $\beta=\log\alpha$,
and observing that
$\big|\tilde {\mathcal C_\nu}\big|\lesssim 1$, 
we conclude
that
\begin{align*}
\gamma_\infty^k (\mathcal A^{k,\nu}(\alpha))
&\lesssim { \sqrt{\log \alpha}}
\int_{E_{\log\alpha}}
{
\int_{s_\alpha (\tilde \xi)<s<C}
}
\exp
 \Big(
-\sum_{j=1}^{k}
{\lambda_j}
e^{2\lambda_j\,s}\tilde \xi_j^2
 \Big)\,ds\,
dS(\tilde \xi) .
\end{align*}
To estimate the integrand here, we observe that
 for $s_\alpha(\tilde \xi) < s <C$
the inequality
\begin{align*}
e^{2 \lambda_j\,s}=
e^{2  \lambda_j\,s_\alpha (\tilde \xi)}\,
e^{2 \lambda_j\,(s-s_\alpha  (\tilde \xi))}
&\ge
e^{2   \lambda_j\,s_\alpha (\tilde \xi)} \big(1+2 \lambda_j (s-s_\alpha  (\tilde \xi))\big)
\end{align*}
implies that
\begin{align*}
\exp&
 \Big(
 \!
-\sum_{j=1}^{k}
{\lambda_j}
e^{2\lambda_j\,s}\tilde \xi_j^2 
 \Big)
 \!
 \le
\exp \Big(-\sum_{j=1}^{k}
{\lambda_j}
e^{2\lambda_j\,s_\alpha  (\tilde \xi) } \tilde \xi_j^2\Big)
\exp\!\Big(\!-2\sum_{j=1}^{k}
{\lambda_j^2}
e^{2\lambda_j\,s_\alpha  (\tilde \xi) }(s-s_\alpha  (\tilde \xi)) \tilde \xi_j^2
\Big)\notag\\
& \le
\exp
 \big(-
R (e^{\lambda \,s_\alpha  (\tilde \xi) }\,\tilde \xi)
 \big)
\exp\Big(-2\lambda_{\mathrm{min}}
(s-s_\alpha  (\tilde \xi))
\,
R (e^{\lambda \,s_\alpha  (\tilde \xi) }\,\tilde \xi)
 \Big)
\notag\\
&\le
\exp
 \big(-
R (e^{\lambda \,s_\alpha  (\tilde \xi) }\,\tilde \xi)
 \big)
\,
\exp\big(-c
\,(s-s_\alpha  (\tilde \xi))\,  \log\alpha 
\big),\end{align*}
because $R (e^{\lambda \,s_\alpha  (\tilde \xi) }\,\tilde \xi)
\simeq \log\alpha$ 
here.
Thus 
\begin{align*}
\gamma_\infty^k (\mathcal A^{k,\nu}(\alpha))
&\lesssim
\sqrt{\log \alpha}
\int_{E_{\log\alpha}}
\int_{s>s_\alpha (\tilde \xi)}\!\!
\exp
 \big(-
R (e^{\lambda \,s_\alpha  (\tilde \xi) }\,\tilde \xi)
 \big)
\exp\big(-c
\,(s-s_\alpha  (\tilde \xi))\,  \log\alpha 
\big)\,ds\,
dS(\tilde \xi) \notag\\
&\lesssim \frac{1}{\sqrt{\log \alpha}}\int_{E_{\log\alpha}}
\exp
 \big(-
R (e^{\lambda \,s_\alpha  (\tilde \xi) }\,\tilde \xi)
 \big)
\,dS(\tilde \xi) .
\end{align*}
Next we combine this estimate
with the case of equality in  \eqref{equality-s-alfa}.
Changing then the order of integration, we finally get
\begin{align*}
\gamma_\infty^k (\mathcal A^{k,\nu}(\alpha))
&\lesssim
\frac{1}{\alpha \sqrt{\log \alpha}}
\int_{E_{\log\alpha}}
\int
\exp
 \big(
-c\, \big|\tilde \xi- \tilde \eta\big|^2
\big)\,f(u)\,\misgausskd
 (u)
dS(\tilde \xi)\\
&\lesssim
\frac{1}{\alpha \sqrt{\log \alpha}}
\int
 \int_{E_{\log\alpha}}
\!\exp
 \!\big(
-c\, \big|\tilde \xi- \tilde \eta\big|^2
\big) dS(\tilde \xi)\,f(u)\,\misgausskd
 (u)\\
&
\lesssim
\frac{1}{\alpha \sqrt{\log \alpha}}
\int
f(u)\,\misgausskd
 (u)\,,
\end{align*}
proving 
Proposition
\ref{propo-mixed-glob}.
\end{proof}

\bigskip

\section{{The case of small  $t$ }}\label{smallt}
The following proposition, combined with
Proposition
\ref{propo-mixed-glob},
will complete
the proof of
 Theorem \ref{th:revisited}.

\begin{proposition}\label{stima-tipo-debole-misto}
Let $k\in\{1,\ldots, n\}$. 
Then the maximal operator $$\sup_{t\le 1}
\big|
 \int_{ \R^n}
K_t^{k,\nu}
(x,u)\,f(
u
)\,
 \misgausskd
(u) 
 \,
\big|\,$$
 is of weak type $(1,1)$ with respect to the invariant measure
 $\misgaussk$, uniformly in $\nu\in \Z^{n-k}$.
\end{proposition}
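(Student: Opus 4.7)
The plan is to combine the Gaussian-convolution bound for the local coordinates (via Lemma \ref{stima1glob}) with a geometric analysis of the global coordinates based on the polar-like parameterization of Section \ref{section:polar}. The key observation is that, for $t\le 1$, the Mehler kernel in polar coordinates is concentrated along the orbits of the drift flow $\xi\mapsto e^{\lambda t}\xi$, so the set of sources $u$ that can produce mass above $\alpha$ at $x$ lies in a narrow tube amenable to Lemma \ref{lemma-Peter-forbidden}.

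First, applying Lemma \ref{stima1glob} to each index $j>k$ yields the factorization
\begin{align*}
K_t^{k,\nu}(x,u)\lesssim G_t(\xi,\eta)\,\frac{1}{t^{(n-k)/2}}\exp\!\Bigl(-c\,\frac{|x_{\mathrm{loc}}-u_{\mathrm{loc}}|^2}{t}\Bigr),\qquad (x,u)\in M_k,\ 0<t\le 1,
\end{align*}
with the purely global kernel
\begin{align*}
G_t(\xi,\eta)=\frac{e^{R(\xi)}}{t^{k/2}}\exp\!\Bigl(-c\sum_{j=1}^{k}\frac{(x_j-e^{-\lambda_j t}u_j)^2}{t}\Bigr).
\end{align*}
Since the density splits as $d\gamma_\infty^k\simeq e^{-R(\eta)}d\eta\,du_{\mathrm{loc}}$ (Lemma \ref{bounded-overlapping}), the local factor, being a heat-semigroup kernel in the $n-k$ local variables, can be absorbed into a standard Hardy--Littlewood-type maximal function $M_{\mathrm{loc}}$ acting on $f(\eta,\cdot)$ at each fixed $\eta$. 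It therefore suffices to prove the weak-type $(1,1)$ bound, with respect to $e^{-R(\eta)}d\eta\,du_{\mathrm{loc}}$, for the global maximal operator $T_*h(\xi,x_{\mathrm{loc}})=\sup_{1/\log\alpha\lesssim t\le 1}\int G_t(\xi,\eta)h(\eta,x_{\mathrm{loc}})\,e^{-R(\eta)}d\eta$ applied to $h=M_{\mathrm{loc}} f$; the restriction $t\gtrsim 1/\log\alpha$ is legitimate because \eqref{claim} forces $K_t^{k,\nu}\lesssim\alpha$ pointwise for smaller $t$.

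Second, I would parameterize $\xi=e^{\lambda s}\tilde\xi$ and $\eta=e^{\lambda s'}\tilde\eta$ with $\tilde\xi,\tilde\eta\in E_{\log\alpha}$, and use Lemma \ref{lemma-peter-coord} to translate the Gaussian decay of $G_t$ into sharp geometric concentration: at a scale $t$ the mass of $G_t(\xi,\cdot)$ lives essentially in $\{\tilde\eta:\,|\tilde\eta-\tilde\xi|\lesssim\sqrt{t}\}$ and $\{s':\,|s'-s-t|\lesssim\sqrt{t/\log\alpha}\}$. Linearizing $T_*$ by a measurable choice $t(x)\in[1/\log\alpha,1]$ realizing the supremum up to a factor $2$, the set of $x$ at which a given source $u$ can contribute to $\{T_*h>\alpha\}$ lies in the outward flow image of a spherical cap of $E_{\log\alpha}$ centered at $\tilde\eta$ of radius $a\simeq\sqrt{t(x)}$. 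This is exactly a tube $Z$ of the type treated in Lemma \ref{lemma-Peter-forbidden}, whose $\gamma_\infty^k$-measure is at most $C\,t^{(k-1)/2}(\log\alpha)^{-1/2}e^{-\log\alpha}\sim C\,t^{(k-1)/2}\bigl(\alpha\sqrt{\log\alpha}\bigr)^{-1}$. This is where the decisive cancellation of $e^{R(\xi)}\sim\alpha$ against $e^{-\log\alpha}$ occurs.

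The main obstacle is that a naive dyadic splitting of $t(x)$ over the roughly $\log\log\alpha$ scales in $[1/\log\alpha,1]$ costs a logarithmic factor; more delicately, the pointwise factor $e^{R(\xi)}t^{-k/2}\sim\alpha\,t^{-k/2}$ is large, and neither the Gaussian decay nor the tube lemma alone suffices to absorb it. The cure is to use their joint effect together with the identity $\int G_t(\xi,\eta)\,e^{-R(\xi)}d\xi=O(1)$ (valid for each fixed $t$ because the two exponentials in $R(\xi)$ cancel and the Gaussian in $\xi$ of width $\sqrt{t}$ integrates to $t^{k/2}$), which combined with the tube estimate gives the correct $O(1/\alpha)$ scaling after integrating in $u$ via Fubini. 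A final technicality is that $h=M_{\mathrm{loc}}f$ is only weak-type $(1,1)$ in the local variables, so the level-set argument must be performed simultaneously for $M_{\mathrm{loc}}$ and $T_*$ by a good-$\lambda$ or truncation device. Together with Proposition \ref{propo-mixed-glob} for $t>1$, this completes the proof of Theorem \ref{th:revisited}.
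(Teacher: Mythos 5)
Your geometric intuition is right on target — polar-like coordinates, concentration of the kernel along the flow, and the tube measure estimate of Lemma \ref{lemma-Peter-forbidden} are precisely the tools the paper uses — but two of the reductions you propose break down, and the paper's proof is structured quite differently to avoid them.

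First, you factor the kernel into a global piece $G_t$ times a local heat-type Gaussian and want to ``absorb'' the local factor into a Hardy--Littlewood maximal function $M_{\mathrm{loc}}$, then apply the global maximal operator $T_*$ to $h=M_{\mathrm{loc}}f$. This composition is the problem you yourself flag at the end: both $M_{\mathrm{loc}}$ and $T_*$ are only weak $(1,1)$, and a weak $(1,1)$ operator applied to a function in $L^{1,\infty}$ need not land in $L^{1,\infty}$; no generic ``good-$\lambda$'' device repairs this. The paper sidesteps composition entirely by decomposing the \emph{full} kernel dyadically in \emph{both} groups of variables at once: for $m_1,m_2\in\N$ it introduces the sets $\mathcal S^{m_1,m_2}_t$ where $|\xi-e^{-\lambda t}\eta|\simeq 2^{m_1}\sqrt t$ and $|x_{\mathrm{loc}}-u_{\mathrm{loc}}|\simeq 2^{m_2}\sqrt t$, obtaining pieces $\mathcal K_t^{m_1,m_2}$ with a factor $\exp(-c2^{2m_1}-c2^{2m_2})$, and proves a weak-type bound for each piece with enough exponential decay in $(m_1,m_2)$ to sum. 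The local and global variables are treated together at each scale, never composed.

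Second, for the global part your plan is a Fubini/duality argument: for each source $u$, the set of $x$ it can affect lies in a tube $Z(u)$; bound $\gamma^k_\infty(Z(u))$ and integrate over $u$. This works for $L^1\to L^1$ bounds, but not for weak-type: a point $x$ lies in the level set because of the \emph{aggregate} of all sources, not because of a single $u$, so there is no well-defined map $x\mapsto u$ dual to $u\mapsto Z(u)$. Moreover the tube's radius depends on the stopping time $t(x)$, so $Z(u)$ is not a single tube. The identity $\int G_t(\xi,\eta)e^{-R(\xi)}\,d\xi=O(1)$ is true but gives only an $L^1\to L^1$ estimate on each scale, not the weak bound, and the tubes for different $u$ overlap. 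The paper's resolution is the recursive ``forbidden zone'' construction: greedily pick $x^{(\ell)}$ minimizing $R(\xi)$ in what remains of the level set, form a source ball $\mathcal B^{(\ell)}$ (of radius $\simeq 2^{m_1}\sqrt{t_\ell}$, $2^{m_2}\sqrt{t_\ell}$) and a larger tube $\mathcal Z^{(\ell)}$ (of radius $\simeq 2^{3m_1}\sqrt{t_\ell}$, $2^{2m_1+m_2}\sqrt{t_\ell}$ --- note the enlargement, which your proposal omits), excise $\mathcal Z^{(\ell)}$, and iterate. Lemma \ref{lemma:disjoint} shows the $\mathcal B^{(\ell)}$ are pairwise disjoint, which lets one sum $\int_{\mathcal B^{(\ell)}}f\,d\gamma^k_\infty\le\|f\|_1$, and \eqref{mixed-bound-ell-1} (where $e^{R(\xi^{(\ell)})}\lesssim\alpha$ combines with the Chebyshev-type lower bound at the stopping time) supplies the crucial $1/\alpha$. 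The tube estimate of Lemma \ref{lemma-Peter-forbidden} is then applied to each $\mathcal Z^{(\ell)}$ as you anticipated, but inside this selection scheme rather than via Fubini. This selection-plus-disjointness machinery is the genuine content of the proof that your sketch does not supply.
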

\begin{proof}
We fix the multiindex $\nu\in\Z^{n-k}$.
As before,
 $f \in {L^1( \misgaussk)}$ is nonnegative,  supported in $\R^k\times\tilde {\mathcal C}_\nu$ and
normalized, and
we write 
$\eta=
(u_j)_{j=1}^k$
and
$e^{-\lambda t} \eta=
\big(e^{-\lambda_j t}u_j\big)_{j=1}^k$.
For $m_1,m_2\in \N$ and $0<t\le 1$, 
we introduce regions $\mathcal S^{m_1,m_2}_t$, depending also on $\nu$.
If $m_1,\, m_2>0$,  let
\begin{align*}
\mathcal S^{m_1,m_2}_t
&=\big\{
(x,u)\in  M_k
:     \:       
2^{m_1-1} \sqrt t< |\xi-e^{-\lambda t}\eta |\le 2^{m_1} \sqrt t\,,\,
\\
&
\qquad 
2^{m_2-1} \sqrt t<|x_{\text{loc}}-u_{\text{loc}}|\le 2^{m_2} \sqrt t\,,\;
x_{\text{loc}}
\in  \mathcal C_\nu
\, ,\,
u_{\text{loc}}\in \tilde { \mathcal C_\nu} 
\,\big\}.
\end{align*}
If $m_1=0$,
we replace the condition
$2^{m_1-1} \sqrt t< |\xi-e^{-\lambda t}\eta |\le 2^{m_1} \sqrt t$ by
$|\xi-e^{-\lambda t}\eta| \le  \sqrt t$.
Analogously,
if $m_2=0$, the inequalities
$2^{m_2-1} \sqrt t<|x_{\text{loc}}-u_{\text{loc}}|\le 2^{m_2} \sqrt t$ 
are replaced by
$|x_{\text{loc}}-u_{\text{loc}}|\le \sqrt t$.
Observe that for any fixed $t$ these sets form a partition of $(\R^k\times\mathcal C_\nu)\times (\R^k\times\tilde{\mathcal C}_\nu)
\cap M_k
$.

In the set ${\mathcal S^{m_1,m_2}_t}$ we can  apply   \eqref{Mehler-Di}, and also \eqref{local-bound} for the local coordinates, to get
\begin{align*}
K_t^{k,\nu} (x,u) &
\lesssim
\frac{
\exp
 (R(\xi))
}{  t^{n/2}
}
\exp 
\Big({
-c{2^{2 m_1} } -c
{2^{2 m_2} } }\Big).
\end{align*}
Thus for all $(x,u)\in\R^n\times \R^n$ and $t> 0$,
\begin{equation*}
K_t^{k,\nu} (x,u)\lesssim
\sum_{m_1,m_2}
{\mathcal K}_t^{{m_1,m_2}} 
(x,u)
\,,
\end{equation*}
where we define
\begin{align}\label{ktm1m2}
{\mathcal K}_t^{{m_1,m_2}} 
(x,u)
&=
\frac{
\exp  (R(\xi))
}{  t^{n/2}
}
\exp 
\Big({
-c{2^{2 m_1} } -c
{2^{2 m_2} } }\Big)
\,\chi_{\mathcal S^{m_1,m_2}_t}(x,u),
\end{align}
omitting the indices $\nu$ and $k$.

Therefore, we  need  only show that 
\begin{equation}\label{obiettivo-finale}
\gamma_\infty^k\left\{
x\in\R^n:
\sup_{t\le 1}
 \int\!
{\mathcal K}_t^{{m_1,m_2}} 
\!
(x,u) f(u)
\misgausskd
(u) 
\!
>\alpha \right\}
 \lesssim
\frac{1}{\alpha}\, \exp 
({
-c{2^{2 m_1} } \!-c
{2^{2 m_2} } })
,
\end{equation}
since this will allow summing in $m_1$, $m_2$ in the space  $L^{1,\infty}$.

Observe that 
${\mathcal K}_t^{{m_1,m_2}} 
(x,u)\neq 0$ implies
$(x,u)\in M_k$
and
$|\xi-e^{-\lambda t}\eta|\le 2^{m_1}\sqrt{t}$,
and then Lemma
\ref{lemma-sost-peter}
yields
\begin{align*}
1& \lesssim 
 (1+|\xi|)^4 t^2+(1+|\xi|)^2 2^{2m_1}t 
\le
 ((1+|\xi
|)^2 2^{2m_1}t)^2+
 (1+|\xi|)^2 2^{2m_1}t.
\end{align*}
From this
it follows that
\begin{equation}\label{stima-t-}
 (1+|\xi|)^2\, 2^{2m_1}\,t \gtrsim 1
\end{equation}
as soon as there exists a point $u$ with $\mathcal K_t^{m_1,m_2}(x,u)\neq 0$.
Then
$t\ge \varepsilon$ for some
$\varepsilon
>0$ which may depend on $m_1, \;m_2$ and $\alpha$.
We conclude that the supremum in  \eqref{obiettivo-finale}
can as well be taken over 
$\varepsilon\le t\le 1$,
and that this supremum is a continuous function
of $x\in  \mathcal E\times 
  \mathcal C_\nu$.
  
\medskip

To verify \eqref{obiettivo-finale},
our idea is
 to construct a finite sequence of pairwise disjoint sets
$\big(\mathcal B^{(\ell)}\big)_{\ell=1}^{\ell_0}$ in $\R^n$ and a sequence of sets $\big(\mathcal Z^{(\ell)}\big)_{\ell=1}^{\ell_0}$ in $\R^n$, called forbidden zones, which will 
contain the level set in \eqref{obiettivo-finale}.
We will  show that
  \begin{equation}\label{final-subset}
\left\{x=(\xi, x_{\mathrm {loc}})\in \mathcal E\times 
  \mathcal C_\nu:
\sup_{\varepsilon\le t\le 1}
\int {\mathcal K}_t^{{m_1,m_2}}   (x,u)\,f(u)\,\misgausskd (u)\,
\ge \alpha
\right\}\subset \bigcup_{\ell=1}^{\ell_0}\mathcal Z^{(\ell)},
\end{equation}
and that for each $\ell$
\begin{align}\label{stima-con-exp} 
&\misgaussk (\mathcal Z^{(\ell)})
\lesssim
\frac{1}{\alpha} 
\exp 
\Big({
-c{2^{2 m_1} } -c
{2^{2 m_2} } }\Big)
 \int_{\mathcal B^{(\ell)}}f(u) 
\,\misgausskd
 (u).
\end{align}
Since 
the $ \mathcal B^{(\ell)}$ will be pairwise disjoint, 
we 
will  then be able to
  conclude
\begin{align*} 
\misgaussk \Big(\bigcup_{\ell=1}^{\ell_0} \mathcal Z^{(\ell)}  \Big)
 &\lesssim
\frac{1}{\alpha} \exp 
\Big({
-c{2^{2 m_1} } -c
{2^{2 m_2} } }\Big)
 \sum_{\ell=1}^{\ell_0}
  \int_{\mathcal B^{(\ell)}}f(u)\,\misgausskd
 (u)\notag\\
 &\lesssim
\frac{1}{\alpha} \exp 
\Big({
-c{2^{2 m_1} } -c
{2^{2 m_2} } }\Big)
\|f\|_{L^1(\misgaussk)}
.\end{align*}
This
will
 imply \eqref{obiettivo-finale}
and finish the proof 
 of Proposition \ref{stima-tipo-debole-misto}.
\medskip

The  sets $\mathcal B^{(\ell)}$ and
 $\mathcal Z^{(\ell)}$
will be defined recursively, by means of 
points $x^{(\ell)}$,\; $\ell=1,\ldots, \ell_0$.
To find the first point $x^{(1)}$, we consider the minimum of the quadratic form
$R(\xi)$
in the compact set
  \begin{equation*}
\mathcal A_0 = 
 \{x\in \mathcal E\times 
  \mathcal C_\nu:
\sup_{\varepsilon\le t\le 1}
\int {\mathcal K}_t^{{m_1,m_2}}   (x,u)\,f(u)\,\misgausskd
\ge \alpha
\} .
\end{equation*}
(Should this set be empty,
\eqref{obiettivo-finale} is immediate.)

By continuity this minimum 
is attained at some point $x^{(1)}=\big(\xi^{(1)}\,,\,x^{(1)}_{\text{loc}}
\big)$
of $\mathcal A_0$.
Moreover,
 there is some $t$, called $t_{1}$, in  $[\varepsilon,1]$
for which the supremum is attained, so that
$$
\int \mathcal {K}_{t_1}^{{m_1,m_2}}  (x^{(1)},u)\,f(u)\,\misgausskd
 (u)\ge \alpha
.$$
Because of the expression \eqref{ktm1m2} for the kernel ${\mathcal K}_{t}^{{m_1,m_2}}$ and the definition of ${\mathcal S^{m_1,m_2}_t}$,
this implies
\begin{align}\label{mixed-bound-1}
\alpha
&\le
R(
\xi^{(1)}) \,
  {t_1^{-n/2}}
\exp 
\Big({
-c{2^{2 m_1} } -c
{2^{2 m_2} } }\Big)
\int_{\mathcal B^{(1)}}f(u)\,\misgausskd
 (u),
 \end{align}
where the set $\mathcal B^{(1)}$     is defined by
$$\mathcal B^{(1)}=
\left\{\big( \eta, \,u_{\text{loc}} \big)
\!\in \R^k\times 
 \tilde { \mathcal C_\nu} 
:|  \xi^{(1)}-e^{-\lambda t_1}\eta|\le 
2^{m_1 } \sqrt t_1\,,\;
|x^{(1)}_{\text{loc}}-u_{\text{loc}}|\le 2^{m_2} \sqrt t_1\,
\right\}.$$
Next we introduce
the first {\em forbidden zone} (the terminology is taken from \cite{Peter})
\begin{multline*}
\mathcal Z^{(1)}
=\Big\{   \big(e^{\lambda s}\eta, u_{\text{loc}}     
\big)\in \R^k\times 
 \tilde { \mathcal C_\nu} :\,s \ge 0,\; R{( \eta)}
= R(\xi^{(1)}),\; \\
| \eta- \xi^{(1)}|
 < A 2^{3m_1}    \sqrt{t_1}   \,,
\,|u_{\text{loc}}-x^{(1)}_{\text{loc}}|< B 2^{2m_1+m_2}  \sqrt{t_1}
  \Big\},
\end{multline*}
for some $A,B>0$ to be determined, depending only on the dimension and  the parameters of the semigroup. 

The construction now proceeds by recursion.
Assume that we have selected $x^{(h)}$, $\mathcal B^{(h)}$
and $\mathcal Z^{(h)}$
for $h=1,\ldots, \ell-1$.
The definition of the point $x^{(\ell)}$ is analogous to that of 
$x^{(1)}$ above, except that the forbidden zones
 $\mathcal Z^{(h)}$, $h=1,\ldots, \ell-1$, are now excluded.
More precisely, if  the set
  \begin{equation}\label{def:set}
\mathcal A_\ell = 
\left\{x\in ( \mathcal E\times 
  \mathcal C_\nu ) 
  \setminus   \bigcup_{h=1}^{\ell-1} \mathcal Z^{(h)}:
\sup_{\varepsilon\le t\le 1}
\int {\mathcal K}_t^{{m_1,m_2}}   (x,u)\,f(u)\,\misgausskd (u)
\ge \alpha
\right\}
\end{equation}
is nonempty, we choose
 $x^{(\ell)}=\big(
\xi^{(\ell)}, x_{\mathrm{loc}}^{(\ell)}
\big)
$ 
as a point
 minimizing  $R(\xi)$ in $\mathcal A_\ell$.
 But if  $\mathcal A_\ell = \emptyset$,
the process stops at $\ell_0=\ell-1$.
We shall soon see that this actually occurs for some finite 
  $\ell_0$, which will depend on  $m_1, \;m_2$ and $\alpha$.

Assume now that $\mathcal A_\ell \ne \emptyset.$  We verify below that 
 $\mathcal A_\ell$ is compact, so that   $x^{(\ell)}$ can be chosen.
Then
 there is some  $t_{\ell}\in [\varepsilon, 1]$ for which
$$
\int {\mathcal K}_{t_\ell}^{{m_1,m_2}} (x^{(\ell)},u)\,f(u)\,\misgausskd (u)\ge \alpha.
$$
We observe that \eqref{stima-t-}
applies to $t_\ell$ and
$x^{(\ell)}$, 
so that
\begin{equation}\label{stima-t-prop}
(1 +  |\xi^{(\ell)}|)^2\,
  2^{2m_1}\,t_\ell  \gtrsim 1.
\end{equation}
Further, we define 
\begin{align*}
\mathcal B^{(\ell)}=&
\{
\big(
\eta
,
u_{\text{loc}}\big)
\in \R^k\times \tilde { \mathcal C_\nu}:\,
|
  \xi^{(\ell)}
  -e^{-\lambda t_\ell}\eta
 |
 \le 2^{m_1} \sqrt{t_\ell}\,,\; 
 |x_{\text{loc}}^{(\ell)}-u_{\text{loc}}|\le 2^{m_2} \sqrt{t_\ell}
\}\,,
\end{align*}
and
the associated forbidden region is
\begin{align*}
\mathcal Z^{(\ell)}&=\{\big(e^{\lambda s}\eta, u_{\text{loc}}
\big)
\in \R^k\times \tilde { \mathcal C_\nu}
:\,
s \ge 0
,\; R{( \eta
)}= R(\xi^{(\ell)}),
\,| \eta- \xi^{(\ell)}
|< A 2^{3m_1} \sqrt{t_\ell},\;
\\
&\qquad
\,|u_{\text{loc}}-x^{(\ell)}_{\text{loc}}|<B  2^{2m_1+m_2}  \sqrt{t_\ell}
\}.
\end{align*}

To see that  $\mathcal A_\ell$ is closed and thus compact, observe that
for $1\le h \le \ell - 1$ the minimum property of $x^{(h)} $ implies that
$\mathcal A_\ell \subset \mathcal A_h  \subset\{x= (\xi,x_{\mathrm{loc}}):
\: R(\xi) \ge R(\xi^{(h)}) \}.$ Thus
\begin{multline*}
  \mathcal A_\ell = \mathcal A_\ell \cap \{x= (\xi,x_{\mathrm{loc}}):
\: R(\xi) \ge R(\xi^{(h)}), \;1 \le h \le \ell-1 \} = \\
\bigcap_{h=1}^{\ell-1} \left\{x \in ( \mathcal E\times  \mathcal C_\nu )\setminus
 \mathcal Z^{(h)}: \:  R(\xi) \ge R(\xi^{(h)}),\;\sup_{\varepsilon\le t\le 1}
\int {\mathcal K}_t^{{m_1,m_2}}   (x,u)\,f(u)\,\misgausskd (u)
\ge \alpha  \right\}.
\end{multline*}
The sets in this this intersection are all closed because of the definition of
$ \mathcal Z^{(h)}$, and so  $ \mathcal A_\ell$ is closed. This completes the
description of the recursive procedure.

In analogy with \eqref{mixed-bound-1}
we have
\begin{align}\label{mixed-bound-ell-1}
\alpha&\le
{
\exp \big({
R(\xi^{(\ell)})   }\big)
}{ \, t_\ell^{-n/2}
}
\exp 
\big({
-c{2^{2 m_1} } -c
{2^{2 m_2} } }\big)
\int_{\mathcal B^{(\ell)}
}f(u)\,\misgausskd
 (u).
\end{align}
We now verify that the sets $\mathcal B^{(\ell)}$ 
and
$\mathcal Z^{(\ell)}$ 
have the required 
properties.

\begin{lemma}\label{lemma:disjoint}
The collection of sets 
$\mathcal B^{(\ell)}$ 
is pairwise disjoint.
\end{lemma}
\begin{proof}

We prove that
 any two sets 
$\mathcal B^{(\ell)}$ 
and $\mathcal B^{(\ell')}$  with $\ell<\ell'$
 are disjoint.
Since
 \begin{equation*}
\big|
 \xi^{(\ell)}-
e^{-\lambda t_{\ell}} 
\eta\big|
=
\big|
e^{-\lambda t_{\ell}} \big(
e^{\lambda t_{\ell}} 
 \xi^{(\ell)}-
\eta\big)\big|
\ge
e^{- {\lambda_{\max}} t_{\ell}} 
\big|
e^{\lambda t_{\ell}} 
 \xi^{(\ell)}-
\eta\big| 
\end{equation*}
for $t\le 1$, the projection of
$\mathcal B^{(\ell)}$ 
in $\R^k$ is contained in
a ball 
with center 
$
e^{\lambda t_\ell} \xi^{(\ell)}$ and radius $ 2^{m_1 }e^{\lambda_{\max}}  \sqrt{t_\ell}$.
Moreover, the projection of
$\mathcal B^{(\ell)}$ 
in $\R^{n-k}$
is contained in a ball 
with center 
$
 x_{\mathrm{loc}}^{(\ell)}$ and radius $ 2^{m_2 } \sqrt{t_\ell}$.
The projections of $\mathcal B^{(\ell')}$ have analogous properties.

Thus it is enough to prove that  the centers of these balls in $\R^k$ and $\R^{n-k}$
 are far from each other; more precisely, that 
 \begin{equation}\label{stima-distanza-tang}
\big|
e^{\lambda t_\ell} \xi^{(\ell)}-
e^{\lambda t_{\ell'}} \xi^{(\ell')}
\big|
\ge 
 2^{m_1} {{ e^{\lambda_{\max}} }}(\sqrt {t_\ell}+
 \sqrt{ t_{\ell'}}),
\end{equation}
or
 \begin{equation}\label{stima-distanza-tang-loc}
\big|
 x_{\mathrm{loc}}^{(\ell)}- x_{\mathrm{loc}}^{(\ell')}
\big|
\ge 
 2^{m_2} (\sqrt {t_\ell}+
 \sqrt{ t_{\ell'}}).
\end{equation}
Using the coordinates from Subsection \ref{section:polar} with $\beta=R(\xi^{(\ell)})$,
we write
\begin{equation*}
\xi^{(\ell')}=e^{\lambda s} \tilde\xi^{(\ell')} 
\end{equation*}
for some $\tilde\xi^{(\ell')}$ with
$R(\tilde\xi^{(\ell')})=
R(\xi^{(\ell)})$ and some $s \in\R$.
Here  $s\ge 0$, because
 $R(\xi^{(\ell')})\ge R( \xi^{(\ell)})$.
Since $x^{(\ell')}$ is not in the forbidden zone $ \mathcal Z^{(\ell)}$,
we must have
\begin{equation}\label{hypo1}
|
\tilde\xi^{(\ell')}-
\xi^{(\ell)}
|\ge A
 2^{3m_1} \sqrt {t_\ell}
 \end{equation} 
 or
 \begin{equation}\label{hypo2}
  \,|x^{(\ell')}_{\text{loc}}-x^{(\ell)}_{\text{loc}}|\ge B  2^{2m_1+m_2}  \sqrt{t_\ell}.
  \end{equation}

 Assume first that ${t_{\ell'}}
\ge M 2^{4m_1}  t_\ell$, for some $M\ge 2$ to be chosen.
Together with Lemma \ref{lemma-peter-coord} $(b)$,  this assumption implies
\begin{align*}
\big|
e^{\lambda t_\ell} \xi^{(\ell)}-
e^{\lambda  t_{\ell'}} \xi^{(\ell')}
\big|&=
\big|
e^{\lambda t_\ell} \xi^{(\ell)}-
e^{\lambda ( t_{\ell'}+s)} \tilde\xi^{(\ell')}
\big|
\gtrsim  
|\xi^{(\ell)}|\,( t_{\ell'}+s-t_\ell)
\gtrsim  
|\xi^{(\ell)}|\,
{ t_{\ell'}}
.\end{align*}
We now apply the assumption again and then
\eqref{stima-t-prop}, observing that $|\xi^{(\ell)}| \simeq \log \alpha > 1$
because $\xi^{(\ell)} \in \mathcal E$.
This gives
\begin{align*}
\big|
e^{\lambda t_\ell} \xi^{(\ell)}-
e^{\lambda  t_{\ell'}} \xi^{(\ell')}
\big|&\gtrsim
 |\xi^{(\ell)}|\,\sqrt{M}
\, 2^{2m_1}
 \sqrt{ t_{\ell}}\, \sqrt{ t_{\ell'}}\\
&\gtrsim 
\sqrt{M}\,
 2^{m_1}\,
 \sqrt{ t_{\ell'}}
 \\
&\gtrsim
 \sqrt{M}\,
 2^{m_1}\,
( 
\sqrt{ t_{\ell'}} 
+\sqrt{ t_{\ell}}) 
.\end{align*}
Fixing $M$ conveniently, depending on  the implicit constants, 
we obtain  \eqref{stima-distanza-tang}.

In the remaining case  ${t_{\ell'}}
< M 2^{4m_1}  t_\ell$, we have
\begin{equation*}
\sqrt{t_{\ell}}
>
\frac{2^{-2m_1-1}}{\sqrt{M}}
( \sqrt {t_{\ell'}}+\sqrt {t_\ell}).
\end{equation*}
Applying this to  \eqref{hypo1}
or
\eqref{hypo2}, we arrive at
\eqref{stima-distanza-tang} or \eqref{stima-distanza-tang-loc}
by choosing $A=2
e^{\lambda_{\text{max}}}\sqrt{M}$
and $B=2\sqrt{M}$.
\end{proof}

We next verify that the sequence $(x^{(\ell)})$ 
is finite.
For $\ell<\ell'$,   
 we have
as in the preceding proof 
\eqref{hypo1}
or
\eqref{hypo2}.
In the  case of \eqref{hypo1}, Lemma \ref{lemma-peter-coord}\,$(a)$
implies
\begin{align*}
\big|
\xi^{(\ell')}-
 \xi^{(\ell)}
\big|&
\gtrsim A
 2^{3m_1} \sqrt {t_\ell}.
\end{align*}
Since $t_\ell\ge \varepsilon$, we see that in both cases the distance 
$\big|
x^{(\ell')}-
x^{(\ell)}
\big|$ is bounded below by a positive constant.
But all the $
x^{(\ell)}$
are contained in the bounded set 
$ \mathcal E\times 
  \mathcal C_\nu  $, so they are finite in number.
Thus the set considered in \eqref{def:set}
must be empty for some $\ell-1=\ell_0$. This implies 
\eqref{final-subset}.

We now prove 
\eqref{stima-con-exp} .
Observe that the global component of  the forbidden zone
$\mathcal Z^{(\ell)}$
corresponds to some  region $Z$,  as  defined in \eqref{zona}, where
 $a=A 2^{3m_1} \sqrt{t_\ell}$ and $\beta=R(\xi^{(\ell)})$. 
By applying
Lemma \ref{lemma-Peter-forbidden} and  taking also the local component  into account, we get
\begin{align*}
\misgaussk (  \mathcal Z^{(\ell)})&\lesssim
\frac{\big(  A 2^{3m_1} \sqrt{t_\ell}\big)^{k-1}}{\sqrt{ R(\xi^{(\ell)})}}\, \exp\left(
{ -{
R(\xi^{(\ell)})   }}\right)
\big(
 B 2^{2m_1+m_2}  \sqrt{t_\ell}\big)^{n-k}
\notag\\
&\lesssim  \frac{1}{\sqrt{\log\alpha}}
 \big(A 2^{3m_1}\big)^{k-1} \,
\big(B 2^{2m_1+m_2}\,\big)^{n-k} \,
t_\ell^{(n-1)/2}\, 
\exp\left(
{ -{
R(\xi^{(\ell)})   }}\right)
, \end{align*}
since $|\xi^{(\ell)}|\simeq \sqrt{\log \alpha}$.
Estimating the exponential here by  means of \eqref{mixed-bound-ell-1},
we obtain 
\begin{align*}\misgaussk& (\mathcal Z^{(\ell)})\!\!
&\lesssim \frac{1}{\alpha
{\sqrt{t_\ell\,\log\alpha}}} 
 (A 2^{3m_1})^{k-1} 
\big(B 2^{2m_1+m_2}\,\big)^{n-k} \,
e^{
{
-c{2^{2 m_1} } -c
{2^{2 m_2} } }}
\int_{\mathcal B^{(\ell)}
}\!f(u)\misgausskd
 (u).
\end{align*}
Applying  also
\eqref{stima-t-prop},
we finally conclude
\begin{align*} 
\misgaussk
 (\mathcal Z^{(\ell)})
&\lesssim
\frac{2^{m_1}}{\alpha} 
 \big(A 2^{3m_1}\big)^{k-1} \,
\big( B2^{
2m_1+m_2}\big)^{n-k} \,
 \exp 
({-c{2^{2 m_1} } -c {2^{2 m_2} }})
\int_{\mathcal B^{(\ell)}}f(u)\,\misgausskd
 (u)\,
\\
 &\lesssim
\frac{1}{\alpha} \exp 
({-c{2^{2 m_1} } -c {2^{2 m_2} }})
 \, \int_{\mathcal B^{(\ell)}}f(u)\,\misgausskd
 (u).\end{align*}
This proves 
\eqref{stima-con-exp} and ends  the proof of Proposition  \ref{stima-tipo-debole-misto}.
\end{proof}

Finally, combining Proposition \ref{propo-locale},
Proposition \ref{propo-mixed-glob},
and Proposition \ref{stima-tipo-debole-misto}, 
we complete  the proof of Theorem \ref{th:revisited}, and therefore also that of  Theorem \ref{weaktype1}.

\medskip

In the next section, we will need a variant of 
Theorem \ref{weaktype1}, where the Mehler kernel is slightly modified. 
The proof of  Theorem
\ref{weaktype1} also
yields the following result.

\begin{theorem}\label{mehler1111}
Let $\kappa>0$.
The maximal operator associated with the kernel
\begin{align}\label{Mehler1}
&\frac{
\exp \Big({\sum_{j=1}^{n}
{\lambda_j}x_j^2   }\Big)
}{\sqrt{\Pi_{j=1}^n(1-e^{-2 \lambda_j t})}
}
\exp
 \Big(-\kappa
 {\sum_{j=1}^{n}\frac{\lambda_j}{1-e^{-2 \lambda_j t}}
(x_j-e^{-\lambda_j t}u_j)^2 }\Big)
\,,\qquad \text{ $t>0$, } 
\end{align}
is of weak type
$(1,1)$ with respect to the  measure $\gamma_\infty$.
\end{theorem}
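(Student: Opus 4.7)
The proof plan is to show that Theorem \ref{mehler1111} reduces to a verbatim repetition of the proof of Theorem \ref{weaktype1}, with the parameter $\kappa$ tracked through the estimates and absorbed into the implicit constants. The kernel in \eqref{Mehler1} differs from the Mehler kernel $K_t(x,u)$ only by the factor $\kappa$ multiplying the quadratic form in the exponent; the prefactor $\exp(\sum_j \lambda_j x_j^2)$ and the reference measure $\gamma_\infty$ are unchanged. Since the new kernel still splits as a tensor product of one-dimensional kernels of the same shape as \eqref{KtjQB} (with $\kappa$ inside the exponential), the entire scheme --- splitting into the regions $M_k$, covering the local variables by rectangles $\mathcal C_\nu$, restricting the global variables to the ellipsoidal annulus $\mathcal E$, and separating the regimes $t>1$ and $t\le 1$ --- carries over without modification.

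The first thing to revisit is the local estimate in Lemma \ref{stima1glob}. Multiplying the chain \eqref{local-bound} by $\kappa>0$ gives
\[
\kappa\,\frac{(x_j-e^{-\lambda_j t}u_j)^2}{1-e^{-2\lambda_j t}}\ge \kappa\,\frac{(x_j-u_j)^2}{1-e^{-2\lambda_j t}} -4\kappa,
\]
so the same Gaussian bound holds with a new constant $c=c(\kappa)$ and an extra factor $e^{4\kappa}$ absorbed into the implicit constant. The reduction to the ellipsoidal annulus in Subsection 4.1 is unaffected, since the proof of \eqref{claim} only uses $e^{R(\xi)}<\sqrt\alpha$ and the fact that the exponential with $\kappa$ in front is still $\le 1$. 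The purely geometric material of Subsections 4.2 and 4.3 (the polar-like coordinates, Lemma \ref{lemma-peter-coord}, and Lemma \ref{lemma-Peter-forbidden}) makes no reference to the kernel and is therefore used verbatim.

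Next I would rerun the large-$t$ argument of Section 5. The key estimate becomes
\[
K^{\kappa}_{t,k,\nu}(x,u)\lesssim \exp(R(\xi))\,\exp\bigl(-c\kappa|\tilde\xi-\tilde\eta|^2\bigr),
\]
after applying Lemma \ref{lemma-peter-coord}(a), and the remainder of the proof of Proposition \ref{propo-mixed-glob} proceeds unchanged with $c$ replaced by $c\kappa$. For the small-$t$ regime of Section 6, the partition into sets $\mathcal S^{m_1,m_2}_t$ and the bound \eqref{ktm1m2} become
\[
\mathcal K^{m_1,m_2,\kappa}_t(x,u)\lesssim \frac{\exp(R(\xi))}{t^{n/2}}\,\exp\bigl(-c\kappa\,2^{2m_1}-c\kappa\,2^{2m_2}\bigr)\,\chi_{\mathcal S^{m_1,m_2}_t}(x,u),
\]
so the summability over $m_1,m_2$ in \eqref{obiettivo-finale} still holds. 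The recursive forbidden-zone construction depends only on the geometric inputs from Section 4 and on the bound \eqref{mixed-bound-ell-1} at the selected points; both survive with $\kappa$ present, after the trivial adjustment of the constants $A$ and $B$ to accommodate the $\kappa$-dependent constants.

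I do not expect any genuine obstacle here; the work is purely one of careful bookkeeping, since $\kappa>0$ is a fixed positive number and every place where it enters is as a multiplier of a quadratic form in an exponential decay factor. The conclusion is that the maximal operator associated with the kernel \eqref{Mehler1} is of weak type $(1,1)$ with respect to $\gamma_\infty$, with bound depending on $\kappa$ through the constants produced along the way.
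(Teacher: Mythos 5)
Your proposal is correct and matches the paper's approach exactly: the paper itself gives no separate argument for Theorem \ref{mehler1111}, stating only that ``the proof of Theorem \ref{weaktype1} also yields the following result,'' and your walk-through correctly identifies that $\kappa$ enters only as a multiplier of the Gaussian exponent and is absorbed into the constants $c$ (with the geometric Lemmas \ref{lemma-sost-peter}, \ref{lemma-peter-coord}, \ref{lemma-Peter-forbidden} and the selection/forbidden-zone construction untouched). One small inaccuracy worth noting: the constants $A$, $B$ in the forbidden zones are fixed via the purely geometric disjointness argument (Lemma \ref{lemma:disjoint}, which uses only Lemma \ref{lemma-peter-coord} and \eqref{stima-t-prop}) and hence do \emph{not} require any $\kappa$-dependent adjustment, but this does not affect the validity of your argument.
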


\bigskip

 \section{ The general case}\label{building}
 We go back to the setting of Section \ref{intro}
 and prove Theorem \ref{main-theorem}. Thus we assume that the semigroup 
 $\big(
\mathcal H_t^{Q,B}
\big)_{t> 0}$
is normal.

Metafune,  Pr\"uss,  Rhandi and  Schnaubelt
found in \cite{MPRS}
a decomposition of $\R^n$ into subspaces invariant under $
\mathcal H_t$
called building blocks. The restriction of 
$
\mathcal H_t$
to each building block has covariance $Q=I$ 
and drift $B=\lambda(R-I)$,
where  $\lambda>0$ and $R$ is a real skew-symmetric matrix. 
In  \cite{Mauceri-Noselli}
Mauceri and Noselli then decomposed
each building block into invariant subspaces
of dimensions $1$ and $2$,
in which the kernel of 
$\mathcal H_t$
has an explicit and rather simple form.

Combining the decompositions in \cite{MPRS}
and  \cite{Mauceri-Noselli},
the result is that
after a change of coordinates we will have covariance matrix $Q=I$
and a drift matrix of the form
\begin{equation*}
B = \text{diag}\big(
B_2, B_4,\ldots, B_{2m},  -\lambda_{2m+1}, \ldots,  -\lambda_{n}\big).
\end{equation*}
Here
$B_{2j}$, $j=1,\ldots, m$,
is a $2\times 2$ block
matrix of the form
\[B_{2j}=
\begin{pmatrix}
-\lambda_{2j}& q_j\\
-q_j&-\lambda_{2j}
\end{pmatrix}\,,
\]
with $\lambda_{2j}>0$ and $q_j\in\R\setminus\{0\}$.
Also $\lambda_i>0$ for $2m<i\le n$.

With $Q$ and $B$ of this form,  we will determine the kernel of
$\mathcal H_t$; as before the integration is with respect to $\gamma_\infty$.
To begin with, we consider the semigroup in  $\R^2$ with covariance matrix $I$
whose  drift matrix is $B_{2j}$. The corresponding invariant measure is
independent of $q_{j}$ and has density
 $\pi^{-1}
\lambda_{2j}\,
\exp \big(
{-
{\lambda_{2j}}
|x|^2   }\big)
$.
For this see  \cite[page 185]{Mauceri-Noselli}, where  our $\lambda_{2j}$
corresponds to $1/(2\alpha)$. As verified in  
\cite[(3.6) and (3.7)]{Mauceri-Noselli}, the kernel of this two-dimensional 
semigroup is 
\begin{align}\label{kt2j}&
K_t^{2j}(x,u)
=\frac{
\exp \Big({\lambda_{2j}
|x|^2   }
\Big)}{1-e^{-2\lambda_{2j} t}}
\exp
 \Big(-\frac{\lambda_{2j} }{1-e^{-2\lambda_{2j}  t}}
{
\big| x-e^{- \lambda_{2j} t}u\big|^2 }\Big)
\,\\
&\qquad\qquad  \times \exp\Big[-\lambda_{2j} \,
\frac{
e^{-\lambda_{2j} t} }{1-e^{-2\lambda_{2j} t}}
\Big(
(1-\cos ( q_{j} t )\big)
\langle x , u \rangle
+\sin ( q_{j}t) \,
x \wedge u
\Big)
\Big],\notag
 \end{align}
where $x,u\in\R^2$ and $x\wedge u
 =
 x_1 u_2 -x_2 u_1$.
In  \cite{Mauceri-Noselli}, $q_j=\theta$ and $\lambda_{2j}=1$; the
simple transformation needed to pass to any $\lambda_{2j}>0$ is indicated
in  \cite[page 185]{Mauceri-Noselli}. We shall use the following estimate
of $K_t^{2j}$; notice that the bound is independent of $q_j$.

\begin{proposition}\label{propo-kernel-isotropic1} 
For $x,u\in \R^2$  and $t>0$, one has
\begin{equation*}   
K_t^{2j}(x,u)
\le 
\frac{
\exp \big(
\lambda_{2j}|x|^2\big)   }{1-e^{-2\lambda_{2j} t}}\,
\exp\Big(-\frac12\, 
\frac{\lambda_{2j}}{1-e^{-2\lambda_{2j} t}}
\big|
x-
e^{-\lambda_{2j}t}u\big|^2
\Big) .\end{equation*}
\end{proposition}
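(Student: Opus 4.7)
The plan is to reduce the bound, after cancelling the common positive prefactor $\exp(\lambda_{2j}|x|^2)/(1-e^{-2\lambda_{2j}t})$ from both sides and dividing through by $\lambda_{2j}/(1-e^{-2\lambda_{2j}t})>0$, to the purely algebraic inequality
\[
\tfrac{1}{2}\bigl|x - e^{-\lambda_{2j}t}u\bigr|^2 + e^{-\lambda_{2j}t}\bigl[(1-\cos(q_jt))\langle x,u\rangle + \sin(q_jt)\,x\wedge u\bigr] \ge 0
\]
for all $x,u\in\R^2$ and $t>0$. This is just the statement that ``the missing half of the Gaussian exponent'' dominates the cross term coming from the rotation in $B_{2j}$.

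Writing $a = e^{-\lambda_{2j}t}$ for brevity, I would first expand $|x-au|^2 = |x|^2 - 2a\langle x,u\rangle + a^2|u|^2$ and observe that the $-a\langle x,u\rangle$ produced by the expansion cancels the $+a\langle x,u\rangle$ arising from the bracketed term, so that the left-hand side collapses to
\[
\tfrac{1}{2}|x|^2 + \tfrac{a^2}{2}|u|^2 - a\bigl[\cos(q_jt)\langle x,u\rangle - \sin(q_jt)\,x\wedge u\bigr].
\]

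The key observation is then that, for the planar rotation $R_\theta$ of angle $\theta$, a direct calculation from the matrix form of $R_\theta$ gives the identity $\langle x, R_\theta u\rangle = \cos\theta\,\langle x,u\rangle - \sin\theta\,(x\wedge u)$. Substituting with $\theta = q_jt$ and using $|R_{q_jt}u|=|u|$, the above expression becomes a perfect square:
\[
\tfrac{1}{2}|x|^2 - a\langle x, R_{q_jt}u\rangle + \tfrac{a^2}{2}|R_{q_jt}u|^2 = \tfrac{1}{2}\bigl|x - a R_{q_jt}u\bigr|^2 \ge 0,
\]
which proves the claim.

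There is essentially no obstacle here beyond careful bookkeeping of signs, so the proof is short. The only structural remark worth making is that the rotation $R_{q_jt}$ appearing in the perfect square is natural: the drift $B_{2j}$ splits as $-\lambda_{2j}I$ plus a skew-symmetric generator of rotations of angular speed $q_j$, and it is precisely this rotation that survives in the Mehler kernel and is absorbed into the square above. In particular, equality holds exactly when $x = e^{-\lambda_{2j}t}R_{q_jt}u$, i.e., along the deterministic trajectory of the drift $e^{tB_{2j}}$, which confirms the bound is sharp in the relevant sense.
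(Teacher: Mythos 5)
Your proof is correct, but it proceeds by a genuinely different and somewhat cleaner route than the paper's. The paper substitutes $z = x - e^{-\lambda_{2j}t}u$ and rewrites the exponent as a quantity $F$ depending on $z$ and $u$, then introduces the angle $\beta$ between $z$ and $u$, applies the product-to-sum identity $(1-\cos\theta)\cos\beta + \sin\theta\sin\beta = 2\sin(\theta/2)\sin(\beta+\theta/2)$, bounds $|\sin(\beta+\theta/2)|\le 1$, and finally invokes the arithmetic-geometric mean inequality to conclude $F \ge |z|^2/2$. You instead stay with the variables $x,u$, note that the quantity to be shown nonnegative collapses after cancellation to $\tfrac12|x|^2 + \tfrac{a^2}{2}|u|^2 - a\bigl[\cos(q_jt)\langle x,u\rangle - \sin(q_jt)\,x\wedge u\bigr]$, recognize the bracket as $\langle x, R_{q_jt}u\rangle$ for a planar rotation, and exhibit the whole thing as the perfect square $\tfrac12|x - aR_{q_jt}u|^2$. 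Your completion-of-the-square argument is shorter, avoids the trigonometric manipulation and the AM-GM step, and explains structurally \emph{why} the bound holds and is sharp: the rotational part of the drift is simply absorbed into a rotation of $u$, under which the Gaussian exponent is invariant. The one small imprecision is in your closing remark: the equality locus $x = e^{-\lambda_{2j}t}R_{q_jt}u$ corresponds to $e^{-\lambda_{2j}t}R_{q_jt} = (e^{tB_{2j}})^{\!*}$ rather than $e^{tB_{2j}}$ itself (the rotation goes the ``wrong'' way), so ``the deterministic trajectory of the drift'' should be the trajectory of the adjoint drift $B_{2j}^{*}$; this does not affect the proof, only the interpretation.
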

\begin{proof}
Let $z = x-e^{-\lambda_{2j}t}u$,  so that $x$ can be replaced by  
$z + e^{-\lambda_{2j}t}u$.
 We then rewrite \eqref{kt2j} as
\begin{equation}\label{111}
K_t^{2j}(x,u)
= 
\frac{
\exp \big(
\lambda_{2j}|x|^2\big)   }{1-e^{-2\lambda_{2j} t}} \,
\exp\Big(-
\frac{\lambda_{2j}}{1-e^{-2\lambda_{2j} t}}\, F \Big),
\end{equation} 
with 
\begin{equation*}
  F = |z|^2 + e^{-\lambda_{2j} t}
\left[ (1-\cos ( q_{j} t ))(e^{-\lambda_{2j}t}\,|u|^2 + \langle z , u \rangle)
+\sin ( q_{j}t)\, z \wedge u \right].
\end{equation*}

 Let $\beta \in (-\pi, \pi]$ be the angle   
between the vectors $z$ and $u$, with the sign chosen so that 
$z\wedge u = |z| |u| \sin \beta$. Then
\begin{eqnarray*}
  F = |z|^2 + e^{-2\lambda_{2j} t} (1-\cos ( q_{j} t ))|u|^2 +
e^{-\lambda_{2j}t}|z| |u| \left[ (1-\cos ( q_{j} t )) \cos \beta
+\sin ( q_{j}t) \sin \beta  \right].
\end{eqnarray*}
But
\begin{multline*}
 (1 -\cos ( q_{j} t )) \cos \beta + \sin ( q_{j}t)\sin \beta
= \cos \beta -\cos( q_{j} t +  \beta) 
= 2 \sin( q_{j} t/2) \sin( \beta + q_{j} t/2).
\end{multline*}
Thus
\begin{eqnarray*}
  F \ge |z|^2 + e^{-2\lambda_{2j} t} (1-\cos ( q_{j} t ) )|u|^2 -
2e^{-\lambda_{2j}t}|z| |u|\, |\sin ( q_{j}t/2) |.
\end{eqnarray*}
Applying the inequality between the geometric and arithmetic means
to the last term here, we conclude
\begin{eqnarray*}
  F \ge |z|^2 + e^{-2\lambda_{2j} t} (1-\cos ( q_{j} t ))|u|^2 -
2e^{-2\lambda_{2j}t} |u|^2 \sin^2 ( q_{j}t/2) - |z|^2/2 =  |z|^2/2.
\end{eqnarray*}
Because of  \eqref{111}, this implies the proposition.
\end{proof}

Consider now the
 semigroup $\mathcal H_t$. The block diagonal structure of the drift 
matrix  $B$ implies that  $\mathcal H_t$ 
 is the product of commuting semigroups acting in  $\R^2$ and  $\R$.
Those  in    $\R^2$ are as just described, and those in  $\R$ are
 like the ones considered in Section \ref{particular}, with kernels given by 
\eqref{KtjQB}.
This implies a tensor product structure  both for the invariant measure
and for the kernel of  $\mathcal H_t$. Let   $\lambda_{2j-1}=\lambda_{2j}$
for $j=1,\ldots,m$. Then
the invariant measure of $\mathcal H_t$
will be given by the expression  \eqref{measure}. Further, Proposition 
\ref{propo-kernel-isotropic1} implies that the  kernel of   
$\mathcal H_t$ satisfies
 \begin{align*}
&
K_t(x,u) \le
\frac{
\exp \Big({  
\sum_{i=1}^n
\lambda_i
|x_i|^2   }
\Big)
}{
\sqrt{\Pi_{i=1}^n(1-e^{-2\lambda_i t})}
}\,
\exp
 \Big(-\frac12\,
 {
 \sum_{i=1}^n
 \frac{\lambda_i }{1-e^{-2\lambda_i  t}}
| x_i-
e^{- \lambda_i t}
u_i
|^2 }\Big),
 \end{align*}
for all $t>0$ and $x,u \in \R^n$.
Observing now that the last expression 
coincides with the kernel given by \eqref{Mehler1} with $\kappa=1/2$ ,
we conclude  the proof of Theorem \ref{main-theorem}
using Theorem \ref{mehler1111}.

\end{document}